\newtheorem{theorem}{Theorem}[section]
\newtheorem{ass}[theorem]{Assumption}
\newtheorem{alg}[theorem]{Algorithm}
\newcommand{\quotes}[1]{``#1''}
\newtheorem{lemma}[theorem]{Lemma}
\theoremstyle{definition}
\newtheorem{definition}[theorem]{Definition}
\newtheorem{example}[theorem]{Example}
\theoremstyle{remark}
\newtheorem{remark}[theorem]{Remark}
\numberwithin{equation}{section}
\begin{document}
\title
[Pesudomonotone equilibirum problem ]
{A new inertial condition on the subgradient extragradient method for solving pseudomonotone equilibrium problem}
\author{$^{1}$Chinedu  Izuchukwu,  $^{2}$Grace  Nnennaya Ogwo,$^{3,4}$Bertin Zinsou}

\keywords{Equilibrium problems; pseudomonotone operator;  inertial technique; subgradient extragradient method; inertial condition.\\
{\rm 2010} {\it Mathematics Subject Classification}: 47H09; 47H10; 49J20; 49J40\\\\
{$^{1,2,3}$School of Mathematics, University of the Witwatersrand, Private Bag 3, Johannesburg, 2050, South Africa.}\\
{$^4$National Institute for Theoretical and Computational Sciences (NITheCS), South Africa}\\
{$^{1}$chinedu.izuchukwu@wits.ac.za,}\\
{$^{2}$grace.ogwo@wits.ac.za}\\
{$^{3,4}$bertin.zinsou@wits.ac.za}}

\begin{abstract}
\noindent In this paper we study the pseudomonotone equilibrium problem. We consider a new  inertial condition for the  subgradient extragradient method with self-adaptive step size for approximating a solution of the equilibrium problem  in a real Hilbert space. Our proposed method contains inertial factor with new conditions that only depend on the iteration coefficient. We obtain a weak convergence result of the proposed method under  weaker conditions on the inertial factor than many existing conditions in the literature.  Finally,  we present some numerical experiments for our proposed method in comparison with  existing methods in the literature. Our result improves, extends and generalizes several existing results in the literature. 
\end{abstract}
	
\maketitle\section{Introduction}
\noindent Let $\mathcal{C}$ be a nonempty closed and convex subset of a real Hilbert space $\mathcal{H}.$ The equilibrium problem (EP) introduced by Blum and Oettli \cite{blum} is the problem of finding a point $x^*\in\mathcal{C}$ such that 
\begin{align}\label{ep}
F(x^*,y)\geq 0, ~\forall~ y\in \mathcal{C},
\end{align} where $F:\mathcal{C}\times\mathcal{C}\to\mathbb{R}$ is a bifunction. Any point $x^*\in \mathcal{C}$ that solves this problem is called an equilibrium point of $F.$ We denote by $EP(F,\mathcal{C})$ the solution set of  Problem \eqref{ep}.
\begin{definition}
		
\noindent A bifunction $F:\mathcal{C}\times \mathcal{C}\to \mathbb{R}$ is said to be 
\begin{enumerate}
\item [(i)]  {\it strongly monotone on $\mathcal{C}$}, if there exists  a constant $c>0$ such that
\begin{align*}
F(x,y)+F(y,x)\leq -c\|x-y\|^2,~~~ \forall~x,y\in \mathcal{C},
\end{align*}
\item[(ii)] {\it monotone on $\mathcal{C}$}, if
\begin{align*}F(x,y)+F(y,x)\leq 0,~\forall~ x,y \in \mathcal{C},
\end{align*}
\item[(iii)]{\it pseudomonotone on $\mathcal{C}$}, if \begin{align*}
F(x,y)\geq0\implies F(y,x)\leq 0,~\forall~x,y\in \mathcal{C},
			\end{align*}
\item[(iv)] satisfying a Lipschitz-like condition on $\mathcal{C}$ if there exist constants $a_1>0$ and $a_2>0$ such that
\begin{align*}F(x,y)+F(y,w)\geq F(x,w)-a_1\|x-y\|^2-a_2\|y-w\|^2,~~~\forall x,y,w\in\mathcal{C}.\end{align*}
\end{enumerate}
\end{definition}
\noindent   We observe that $(i)\implies (ii)\implies(iii)$ but the converses are not always true (see \cite{rp} and other references therein).
	
\noindent The EP \eqref{ep} has received a lot of attention from several  researchers due to the fact that it unifies in a simple form several mathematical models such as optimization problem, fixed point problem, convex minimization problem, Nash equilibrium, variational inequality problem, saddle point problem, among others (see \cite{ii1,ii2,manb,mana} and other references therein). Many authors have proposed and studied several iterative methods for approximating  solutions of  EP \eqref{ep}  and other related optimization problems (see \cite{YCQY,he1,he2,jwp,xq, mou} and other references therein). In  1976, Kopelevich \cite{Kor} introduced the extragradient method for solving saddle point problem.  Quoc et al. \cite{qu} extended the extragradient method to solve the EP \eqref{ep} in a finite dimensional space. This result was later extended to an infinite dimensional Hilbert space by Vinh and Muu \cite{vm}.  They obtained a weak convergence result  under the assumptions that the equilibrium bifunction is pseudomonotone and satisfies the Lipschitz-like condition.  When using this method, one needs to solve two strongly convex optimization problems in the feasible set $\mathcal{C}$ per iteration. This is a major drawback on the extragradient method and could cause the method to be computationally expensive if the set $\mathcal{C}$ is not  simple. To circumvent this limitation, Rehman et al. \cite{hur}  extended the subgradient extragradient method in \cite{Reich} from solving variational inequalities to solving EP \eqref{ep}. The major advantage of the subgradient extragradient method over the extragradient method is that  the second convex optimization problem is onto a half-space which has a closed form solution. Thus, its computational complexity is less expensive than the extragradient method.

\hfill
\noindent  The inertial technique which originated from the heavy ball method of a second order dissipative dynamical system in time was derived by Polyak \cite{pol}. It is one of the  techniques often employed by authors to improve the convergence speed  of iterative methods when solving optimization problems.  This is due to the  fact that it increases the rate of convergence of iterative schemes.  There has been an increase interest in studying inertial type algorithms  for solving optimization problems (see \cite{Dong, att, bot2, PA2, PA4, mouda}), and one key interest in these studies is how to improve the conditions on the inertial factor \cite{Dong}.  In 2003, Moudafi \cite{mouda}  proposed an inertial algorithm for solving the EP \eqref{ep}: Find $x_{n+1}\in\mathcal{C}$ such that 
\begin{eqnarray*}
	F(x_{n+1},x)+\lambda^{-1}_n\langle x_{n+1}-y_n,x-x_{n+1}\rangle\ge -\epsilon_n,~~\forall~~x\in\mathcal{C},
\end{eqnarray*} where $y_n:=x_n+\theta_n(x_n-x_{n-1})$, $~\{\lambda_n\},\{\epsilon_n\}$  are sequences of nonnegative real numbers and the  inertial factor $\theta_n$ satisfies
\begin{eqnarray}\label{con}
0\le \theta_n\le \theta<1~~ \forall n\geq 1, ~~~~\sum\limits_{n=1}^{\infty}\theta_n\|x_n-x_{n-1}\|^2<\infty.
\end{eqnarray} 
Note that condition \eqref{con} involves  the  knowledge of the iterates $x_n$ and $x_{n-1}$ that are a priori unknown. However,  it can be 
ensured in practice by using the suitable on-line rule: $0\leq \theta_n \leq \bar{\theta}_n$, where
\begin{equation} \label{OR}
\bar{\theta}_n=
\begin{cases}
\min\bigg\{\theta, \dfrac{\epsilon_n}{\|x_n-x_{n-1}\|}\bigg\} \
\ \ \ \  &\text{\ if \ } x_n\neq x_{n-1} \\
\theta & \text{otherwise},
\end{cases}
\end{equation}
with $\sum_{n=1}^{\infty}\epsilon_n <\infty$ and $\theta \in [0,1)$. 
The on-line rule \eqref{OR} which also depends on the knowledge of the iterates $x_n$ and $x_{n-1}$, was considered in \cite{hur,vm} for solving EP \eqref{ep}.

\hfill

\noindent In \cite{bott} (see also \cite{Shehu}), the authors introduced the following condition on the inertial factor $\theta_n$:
 \begin{eqnarray*}
0=\theta_1\le \theta_n\le \theta_{n+1}\le \theta<1,~~\forall n\geq 1,
 \end{eqnarray*} 
\begin{eqnarray}\label{con2}
\tau>\frac{\theta^2(1+\theta)+\theta\sigma}{1-\theta^2},~~~0\le \phi\le \phi_n\le \frac{\tau-\theta[\theta(1+\theta)+\theta\tau+\sigma]}{\tau[1+\theta(1+\theta)+\theta\tau+\sigma]},
\end{eqnarray} where $\sigma,\tau>0.$  Unlike in \eqref{con} and \eqref{OR}, condition \eqref{con2} does not require any information on the iterates but on the coefficient $\phi_n$ and other parameters. However, it is complicated to get the upper bound of the inertial sequence even if $\phi_n$ is known. We can also see that the inertial factor is restrictive in \eqref{con2}.

\hfill
	
\noindent The main purpose of this paper is to consider an inertial factor with new conditions that only depend on the iteration coefficient $\phi_n,$ and where  the upper bound of the inertial sequence is easy to determine. Combining these relaxed inertial terms (i.e, the terms with $\theta_n$ and $\phi_n$) with  the subgradient extragradient method, we propose a new method for solving the EP \eqref{ep} when $F$ is pseudomonotone. We prove that the proposed method converges weakly to a solution of EP \eqref{ep}. Furthermore, we present some numerical experiments for our proposed method in comparison with other related methods in the literature.
	
\hfill	
	
\noindent The  rest of the paper is organized as  follows:  In Section \ref{Se2}  we recall  some  basic definitions and results  required for our convergence analysis. Section \ref{Se3} presents  and discusses the features of our proposed method. In Section \ref{Se4}, we study the convergence of this method. In Section \ref{Se6}, we carry out some  numerical experiments of our method in comparison with other  methods  in the literature. We conclude in Section \ref{Se7}.
\section{Preliminaries}\label{Se2}
\noindent In this section, we recall some  lemmas and definitions  which will be  needed  in the subsequent sections. \noindent Let  $\mathcal{H}$   be a real Hilbert space with inner product $\langle \cdot,  \cdot \rangle$, and associated norm $||\cdot||$ defined by $||x||=\sqrt{\langle x, x\rangle}, ~~\forall~ x\in \mathcal{H}$. We  denote  the weak convergence by  \quotes{$\rightharpoonup$}. 
	
\begin{definition}
 The domain of a function $F:\mathcal{H}\to \mathbb{R}\cup \{\infty\}$ is defined by $D(F)=\{x\in \mathcal{H}: F(x)<\infty\}$. The function  $F:D(F)\subseteq \mathcal{H}\to \mathbb{R}\cup \{\infty\}$ is said to be {\it lower semicontinuous  at a point $x\in D(F)$}, if
$$F(x)\leq \liminf_{x_n\to x} F(x_n).$$
\end{definition}
\begin{definition}
Let $F:\mathcal{H}\to (-\infty,\infty]$ be proper.  The subdifferential of $F$ at $x\in \mathcal{H}$ is 
\begin{align*}
\partial_2 F(x)=\left\{u\in\mathcal{H}|~F(y)\geq F(x)+\langle y-x,~u\rangle, ~\forall ~y\in\mathcal{H}\right\}.
\end{align*}
\end{definition}

\noindent  The normal cone $N_{\mathcal{C}}$ of $\mathcal{C}$ at $x\in\mathcal{C}$ is defined by 
\begin{align*}
N_{\mathcal{C}}(x)=\{w\in\mathcal{H}:\langle w, y-x\rangle \leq 0,~~\forall~ y\in\mathcal{C}\}.
\end{align*}

\begin{lemma}\cite{ppo}\label{lpp}
Let $\mathcal{C}$ be a nonempty closed and convex subset of  $\mathcal{H}$ and $g:\mathcal{H}\to \mathbb{R}\cup\{\infty\}$ be a proper, convex and lower semicontinuous functions on $\mathcal{H}.$ Assume either that $g$ is continuous at some point of $\mathcal{C},$ or that there is an interior point of $\mathcal{C}$ where $g$ is finite. Then, $\bar{x}$ is a solution to the following  convex problem $\min\{g({x}):x\in\mathcal{C}\}$ if and only if $0\in\partial_2 g(\bar{x})+N_{\mathcal{C}}(\bar{x}),$ where $\partial_2 g(\cdot)$ denotes the subdifferential of $g$ and $N_{\mathcal{C}}(\bar{x})$ is the normal cone of $\mathcal{C}$ at $\bar{x}.$
\end{lemma}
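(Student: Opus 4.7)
The plan is to convert the constrained minimization over $\mathcal{C}$ into an unconstrained minimization over all of $\mathcal{H}$ by absorbing the constraint into an indicator function, and then to apply Fermat's rule together with the Moreau--Rockafellar sum rule for subdifferentials.

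First I would introduce the indicator function $\iota_{\mathcal{C}}:\mathcal{H}\to \mathbb{R}\cup\{\infty\}$ defined by $\iota_{\mathcal{C}}(x)=0$ if $x\in\mathcal{C}$ and $\iota_{\mathcal{C}}(x)=\infty$ otherwise. Since $\mathcal{C}$ is nonempty, closed, and convex, $\iota_{\mathcal{C}}$ is proper, convex, and lower semicontinuous, and a direct verification of the definition of the subdifferential gives $\partial \iota_{\mathcal{C}}(\bar{x})=N_{\mathcal{C}}(\bar{x})$ for every $\bar{x}\in \mathcal{C}$. The original constrained problem $\min\{g(x):x\in \mathcal{C}\}$ is now equivalent to the unconstrained problem $\min_{x\in \mathcal{H}}(g+\iota_{\mathcal{C}})(x)$.

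Next I would invoke Fermat's rule in its convex form: a point $\bar{x}$ minimizes a proper convex function $h$ on $\mathcal{H}$ if and only if $0\in \partial h(\bar{x})$ (this is immediate from the definition of $\partial h$). Applying this to $h=g+\iota_{\mathcal{C}}$, I get that $\bar{x}$ solves the problem iff $0\in\partial_2(g+\iota_{\mathcal{C}})(\bar{x})$. The crux is then to pass from $\partial_2(g+\iota_{\mathcal{C}})(\bar{x})$ to $\partial_2 g(\bar{x})+\partial_2\iota_{\mathcal{C}}(\bar{x})$, i.e.\ to justify the Moreau--Rockafellar sum rule. This is precisely where the hypothesis is used: either continuity of $g$ at some point of $\mathcal{C}=\mathrm{dom}\,\iota_{\mathcal{C}}$, or the existence of an interior point of $\mathcal{C}$ at which $g$ is finite, are standard constraint qualifications ensuring $\partial(g+\iota_{\mathcal{C}})=\partial g+\partial \iota_{\mathcal{C}}$.

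The hard part will be the sum rule itself, which I would either quote from a standard reference on convex analysis or sketch via separation: the inclusion $\partial g(\bar{x})+\partial\iota_{\mathcal{C}}(\bar{x})\subseteq \partial (g+\iota_{\mathcal{C}})(\bar{x})$ is trivial from the definition, while the reverse inclusion follows from a Hahn--Banach separation of the strict epigraph of $g-\ell$ (where $\ell$ is an affine minorant realizing the subgradient inequality) from the epigraph of $\iota_{\mathcal{C}}$, a step that fails in general without a qualification condition but succeeds under either stated hypothesis. Combining this sum rule with the identification $\partial\iota_{\mathcal{C}}(\bar{x})=N_{\mathcal{C}}(\bar{x})$ yields the equivalence
\begin{equation*}
\bar{x}\in\arg\min_{x\in\mathcal{C}} g(x) \iff 0\in\partial_2 g(\bar{x})+N_{\mathcal{C}}(\bar{x}),
\end{equation*}
which is the desired conclusion.
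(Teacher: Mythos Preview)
Your argument is correct and follows the standard route: rewrite the constrained problem as $\min(g+\iota_{\mathcal{C}})$, apply Fermat's rule, and invoke the Moreau--Rockafellar sum rule under the stated qualification condition to split $\partial(g+\iota_{\mathcal{C}})$ into $\partial g+N_{\mathcal{C}}$. Note, however, that the paper does not give its own proof of this lemma at all; it is simply quoted from the cited reference \cite{ppo} (Peypouquet's convex optimization text), where the argument is exactly the one you outline.
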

	
\begin{lemma}\label{lem1}\cite{bab} Let $\mathcal{H}$ be a real Hilbert space, then the following assertions hold:
\begin{enumerate}
\item [(1)] $2\langle x, y \rangle =\|x\|^2+\|y\|^2-\|x-y\|^2=\|x+y\|^2-\|x\|^2-\|y\|^2,~~\forall x,y \in \mathcal{H};$
\item [(2)] $\|\alpha x+(1-\alpha)y\|^2 = \alpha\|x\|^2+(1-\alpha)\|y\|^2-\alpha(1-\alpha)\|x-y\|^2,~~\forall x,y \in \mathcal{H},~ \alpha \in \mathbb{R}$.
\end{enumerate}
\end{lemma}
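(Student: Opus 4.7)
The plan is to derive both identities directly from the defining properties of the inner product, namely bilinearity, symmetry, and the relation $\|z\|^2 = \langle z, z\rangle$. Since Lemma \ref{lem1} is a purely algebraic statement in a real Hilbert space, no topological or convergence machinery is required; the whole argument reduces to expanding inner products and collecting terms.

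For part (1), I would start from $\|x-y\|^2 = \langle x-y,\, x-y\rangle$ and expand by bilinearity to obtain $\|x\|^2 - 2\langle x,y\rangle + \|y\|^2$. Rearranging gives $2\langle x,y\rangle = \|x\|^2 + \|y\|^2 - \|x-y\|^2$, which is the first equality. Repeating the same expansion on $\|x+y\|^2 = \langle x+y,\, x+y\rangle = \|x\|^2 + 2\langle x,y\rangle + \|y\|^2$ and rearranging yields $2\langle x,y\rangle = \|x+y\|^2 - \|x\|^2 - \|y\|^2$, giving the second equality. Chaining these two expressions through the common quantity $2\langle x,y\rangle$ completes (1).

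For part (2), I would expand $\|\alpha x + (1-\alpha)y\|^2$ using bilinearity as
\[
\alpha^2\|x\|^2 + 2\alpha(1-\alpha)\langle x,y\rangle + (1-\alpha)^2\|y\|^2.
\]
Substituting the identity $2\langle x,y\rangle = \|x\|^2 + \|y\|^2 - \|x-y\|^2$ from (1) into the middle term and regrouping the coefficients of $\|x\|^2$ and $\|y\|^2$ gives $\alpha^2 + \alpha(1-\alpha) = \alpha$ and $(1-\alpha)^2 + \alpha(1-\alpha) = 1-\alpha$, so the expression collapses to $\alpha\|x\|^2 + (1-\alpha)\|y\|^2 - \alpha(1-\alpha)\|x-y\|^2$, as required.

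There is no genuine obstacle in this proof; both identities are standard consequences of the inner product structure, and the only care needed is in bookkeeping the coefficients in part (2) after substituting the polarization-type identity from part (1). One could equally well derive (2) by writing $\alpha x + (1-\alpha)y = y + \alpha(x-y)$ and expanding, which gives an even shorter route, but the expansion-and-substitution approach above keeps the logical dependence on (1) explicit.
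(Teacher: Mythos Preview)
Your proof is correct and is exactly the standard derivation of these identities from bilinearity and symmetry of the inner product. The paper does not supply its own proof of this lemma---it simply cites \cite{bab}---so your argument fills in precisely the elementary verification one would expect.
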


\begin{lemma}\cite{opial}\label{lem4}
Let $\mathcal{C}$ be a nonempty subset of $\mathcal{H}$ and let $\{x_n\}$ be a sequence in $\mathcal{H}$ such that the following two conditions hold:
\begin{itemize}
\item [(a)] for each $p\in\mathcal{C},~~\lim\limits_{n\to\infty}\|x_n-p\|$ exists;
			\item [(b)] every sequential weak cluster point of $\{x_n\}$ belongs to $\mathcal{C}.$
		\end{itemize}Then, $\{x_n\}$ converges weakly to a point in $\mathcal{C}.$
	\end{lemma}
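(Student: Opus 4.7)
The plan is to prove this standard Opial-type lemma by showing that $\{x_n\}$ has a unique weak sequential cluster point in $\mathcal{C}$, which will then be the weak limit. First I would observe that condition (a) forces $\{x_n\}$ to be bounded: fixing any $p\in\mathcal{C}$, the real sequence $\|x_n-p\|$ converges and is hence bounded, so $\{x_n\}$ is bounded in $\mathcal{H}$. Because $\mathcal{H}$ is reflexive, bounded sequences admit weakly convergent subsequences, so at least one weak cluster point exists, and by (b) it lies in $\mathcal{C}$.

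The key step is uniqueness. I would argue by contradiction: suppose $p_1, p_2\in\mathcal{C}$ are two distinct weak cluster points, say $x_{n_k}\rightharpoonup p_1$ and $x_{m_k}\rightharpoonup p_2$. By (a), both $\alpha_i:=\lim_n\|x_n-p_i\|^2$ exist for $i=1,2$. Expanding the squared norms via Lemma \ref{lem1}(1) gives
\begin{equation*}
\|x_n-p_1\|^2-\|x_n-p_2\|^2 \;=\; 2\langle x_n,\,p_2-p_1\rangle + \|p_1\|^2-\|p_2\|^2,
\end{equation*}
so the inner product $\langle x_n,\,p_2-p_1\rangle$ converges to some limit $\ell\in\mathbb{R}$ as $n\to\infty$. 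Evaluating $\ell$ along the subsequence $x_{n_k}\rightharpoonup p_1$ yields $\ell=\langle p_1,p_2-p_1\rangle$, and along $x_{m_k}\rightharpoonup p_2$ yields $\ell=\langle p_2,p_2-p_1\rangle$. Equating these gives $\|p_2-p_1\|^2=0$, contradicting $p_1\neq p_2$.

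Thus the set of weak cluster points of $\{x_n\}$ reduces to a single point $p^*\in\mathcal{C}$. To conclude that the whole sequence converges weakly to $p^*$, I would argue once more by contradiction: if $x_n\not\rightharpoonup p^*$, then there exist $\varphi\in\mathcal{H}$, $\varepsilon>0$, and a subsequence $\{x_{n_j}\}$ with $|\langle x_{n_j}-p^*,\varphi\rangle|\geq\varepsilon$; by boundedness, $\{x_{n_j}\}$ has a further weakly convergent subsequence, and its weak limit is a cluster point of $\{x_n\}$, hence equal to $p^*$ by uniqueness, contradicting the lower bound on $|\langle x_{n_j}-p^*,\varphi\rangle|$. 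I do not anticipate a genuine obstacle here; the only tool one must invoke beyond basic Hilbert space identities is the sequential weak compactness of bounded sets, which is standard.
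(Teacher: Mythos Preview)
Your argument is correct and is the standard proof of this Opial-type lemma. Note, however, that the paper does not supply its own proof of this statement: Lemma~\ref{lem4} is simply quoted from the literature (Opial~\cite{opial}) as a preliminary tool, so there is no proof in the paper against which to compare your approach.
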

\begin{lemma}\label{lem5}\cite{alv3}
Let $\{\gamma_n\}, \{\psi_n\}$ and $\{t_n\}$ be nonnegative sequences. Assume that 
\begin{align*}
\gamma_{n+1}\le \gamma_n+\psi_n(\gamma_n-\gamma_{n-1})+t_n,
\end{align*} and $0\le \psi_n\le \psi<1$ and $\sum\limits_{n=1}^{+\infty}t_n<+\infty.$ Then, $\lim\limits_{n\to+\infty}\gamma_n$ exists.
\end{lemma}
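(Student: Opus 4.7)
The approach I would take is the classical one due to Alvarez and Attouch: reduce the inequality to a one-step recursion on the positive part of the increments, show that increments are summable, and then build a monotone surrogate of $\gamma_n$ whose convergence yields convergence of $\gamma_n$.

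First I would define $\delta_n := \max\{\gamma_n-\gamma_{n-1},0\}$. Splitting into the two cases according to the sign of $\gamma_n-\gamma_{n-1}$, and using the bound $\psi_n \le \psi$ with $\psi_n(\gamma_n-\gamma_{n-1}) \le \psi\,\delta_n$ in both cases, the hypothesis gives
\begin{equation*}
\delta_{n+1} \;\le\; \psi\,\delta_n + t_n .
\end{equation*}
Iterating this one-step recursion and using $0\le\psi<1$ together with $\sum t_n<\infty$, a standard discrete Gronwall / geometric-series estimate (swap the order of summation in $\sum_n\sum_{k\le n}\psi^{n-k}t_k$) would yield $\sum_{n=1}^\infty \delta_n < \infty$.

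Next I would introduce the surrogate $\phi_n := \gamma_n - \sum_{k=1}^{n}\delta_k$. A direct case split shows $\phi_{n+1}\le \phi_n$: if $\gamma_{n+1}\ge\gamma_n$ then $\delta_{n+1}=\gamma_{n+1}-\gamma_n$ and $\phi_{n+1}=\phi_n$; otherwise $\delta_{n+1}=0$ and $\phi_{n+1}=\gamma_{n+1}-\sum_{k=1}^n\delta_k<\phi_n$. Moreover, $\phi_n\ge \gamma_n - \sum_{k=1}^{\infty}\delta_k \ge -\sum_{k=1}^{\infty}\delta_k > -\infty$ since $\gamma_n\ge 0$. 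Hence $\phi_n$ is non-increasing and bounded below, so $\lim_n \phi_n$ exists, and therefore $\gamma_n = \phi_n + \sum_{k=1}^n \delta_k$ also converges.

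The only subtle step I anticipate is the reduction to the inequality $\delta_{n+1}\le \psi\delta_n+t_n$: one must handle the sign of $\gamma_n-\gamma_{n-1}$ carefully, because when $\gamma_n<\gamma_{n-1}$ the term $\psi_n(\gamma_n-\gamma_{n-1})$ is negative and cannot be bounded by $\psi(\gamma_n-\gamma_{n-1})$ without flipping inequalities; taking positive parts is what cleans this up. Everything else is a routine summation argument and a monotone-convergence conclusion, so once the recursion on $\delta_n$ is in place, the lemma follows essentially mechanically.
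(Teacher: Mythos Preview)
Your proposal is correct and is precisely the standard Alvarez--Attouch argument. Note, however, that the paper does not supply its own proof of this lemma: it is stated as a preliminary result with a citation to \cite{alv3} and used as a black box in the proof of Lemma~\ref{lem3}. So there is no in-paper proof to compare against; what you have written is essentially the original proof from the cited reference.
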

\section{proposed Method}\label{Se3}
\noindent In this section, we present our proposed method.  We begin by giving  the following assumptions under which our weak convergence result is obtained.
	
\begin{ass}\label{ass2}
	Let $F:\mathcal{C}\times\mathcal{C}\to\mathbb{R}$ be a function satisfying the following assumptions
	\begin{enumerate}
		\item [(1)] $F(x,x)=0, ~\forall~ x\in \mathcal {C};$
		\item[(2)]  $F$ is pseudomonotone on $\mathcal{C};$ 
		\item[(3)] $F$ satisfies the Lipschitz-like condition on $\mathcal{H}$ with constants $a_1$ and $a_2$;
		\item[(4)] $F(x,\cdot)$ is convex, lower semicontinuous and subdifferential on $\mathcal{C}$ for every $x\in\mathcal{C};$
		\item[(5)] $F(\cdot, y)$ is continuous on $\mathcal{C}$ for every $y\in\mathcal{C}.$
	\end{enumerate}
\end{ass}

\noindent\begin{ass}\label{gtm5as1} 
For all $n\ge 1$ and sufficiently small $\epsilon>0,$ let $0=\theta_1\le \theta_n\le \theta_{n+1}$ and:	
	\begin{itemize}
\item [(i)] $\theta_{n+1}\le \beta_n,~~$ if $\phi_n\in(0,0.5),~~~~\phi_n^{-1}-\phi_{n+1}^{-1}+3>0,$ where
\begin{align}\label{3.1}
\beta_n:=\frac{1}{2}\frac{1}{\phi_{n+1}^{-1}-2}\Big(\phi_n^{-1}+\phi_{n+1}^{-1}-1-\triangle_n\Big)
\end{align} with
\begin{align}\label{3.2}
\triangle_n:=\sqrt{(\phi_n^{-1}+\phi_{n+1}^{-1}-1)^2-4(\phi_n^{-1}-1-\epsilon)(\phi_{n+1}^{-1}-2)}.\\ \nonumber
\end{align}

\item[(ii)] $\theta_{n+1}\le\frac{1-\epsilon}{3},$ if $\phi_n\equiv0.5.$\\
\item[(iii)] $\theta_{n+1}\leq \sqrt{p_n^2+q_n}-p_n,$  if $\phi_n\in(0.5,1-\epsilon],$ where 
\begin{align}\label{3.3}
p_n:=\frac{1}{2}\frac{1}{2-\phi_{n+1}^{-1}}\Big(\phi_n^{-1}+\phi_{n+1}^{-1}-1\Big)\end{align} and 
\begin{align}\label{3.3b} q_n:=\frac{1}{2-\phi_{n+1}^{-1}}\Big(\phi_n^{-1}-1-\epsilon\Big).
\end{align}
\end{itemize}
\end{ass}

\hfill
		
\hrule
\begin{alg} \label{alg2}   Relaxed inertial subgradient extragradient method with adaptive stepsize strategy.
\hrule \hrule
\noindent {\bf{Step 0:}} Choose initial points $x_0,x_1\in\mathcal{H},$ let $\lambda_1>0, \mu\in(0,1)$ and set $n=1.$ 
		
\noindent 	{\bf Step 1}: Given the current iterates $x_{n-1}$ and $x_n~~ (n \geq 1),$ compute
\begin{align*}
w_n=x_n+\theta_n(x_n-x_{n-1})
		\end{align*}
		and
		$$y_n=	\arg\min\Big\{\lambda_n F(w_n,y)+\frac{1}{2}\|w_n-y\|^2:~ y\in \mathcal{C}\Big\}.$$
		If $y_n=w_n$: STOP. Otherwise, go to {\bf Step 2.}\\
		\noindent	{\bf Step 2}: Choose $\omega_n\in \partial_2 F(w_n,y_n)$ and $w^*\in N_{\mathcal{C}}(y_n)$  such that  $w^*=w_n-\lambda_n\omega_n-y_n$ and  construct the half-space
		\begin{align*}
			T_n=\{x\in \mathcal{H}:\langle w_n-\lambda_n \omega_n-y_n, \hspace{0.1cm}x-y_n \rangle\leq 0\}.
		\end{align*}
		
		\noindent	Then, compute
		$$z_n=\arg\min\Big\{\lambda_n F(y_n,y)+\frac{1}{2}\|w_n-y\|^2:~y\in T_n\Big\}.$$
		{\bf STEP 3:} Compute
		$$x_{n+1}=(1-\phi_n)w_n+\phi_nz_n,$$
		where
		
		\begin{eqnarray}\label{eq1}
			\lambda_{n+1}=\begin{cases}
				\min \left\{\frac{\mu\left(\|w_n-y_n\|^2+\|z_n-y_n\|^2\right)}{2(F(w_n,z_n)-F(w_n,y_n)-F(y_n,z_n))},~\lambda_n\right\}, & \mbox{if}~ F(w_n,z_n)-F(w_n,y_n)-F(y_n,z_n) >0,\\\\
				\lambda_n,& \mbox{otherwise}.
			\end{cases}
		\end{eqnarray}
		Set $n:=n+1$ and return to {\bf Step 1.}
		\hrule\hrule
	\end{alg}

\hfill

\begin{remark}\label{rmk}
	\begin{itemize}
\item[(a).] $\phi_{n+1}^{-1}-2>0$  in (i) since $\theta_{n+1}<\frac{1}{2}.$ Similarly,  $2-\phi_{n+1}^{-1}>0$ in (iii).\\

\item [(b).] The sequence $\{\triangle_n\}$  defined in \eqref{3.2} is well-defined. Indeed,
\begin{eqnarray*}
	&& (\phi_n^{-1}+\phi_{n+1}^{-1}-1)^2-4(\phi_{n+1}^{-1}-2)(\phi_n^{-1}-1-\epsilon)\\
	&=&\Big(\phi_n^{-1}-\phi_{n+1}^{-1}\Big)^2+6\phi_n^{-1}+2\phi_{n+1}^{-1}+4\Big(\phi_{n+1}^{-1}-2\Big)\epsilon-7\\
	&>&0. 
\end{eqnarray*} 
\end{itemize}
Therefore, Assumption \ref{gtm5as1} is valid.
\end{remark}

\hfill

\begin{remark}
In contrast  to the assumptions in \cite{mouda,hur,vm}, Assumption \ref{3.2} does not require the knowledge of the iterates. Also, unlike in \cite{bott}, the choice of $\theta_n$ is relaxed and its upper bound is easy to obtain; once $\phi_n$ is chosen, it becomes very easy to compute $\theta_n.$
\end{remark}
	
	\hfill

\begin{lemma}\cite{tta}\label{lem2}
The sequence $\{\lambda_n\}$ generated by Algorithm \ref{alg2} is a monotonically decreasing sequence with lower bound $\min\left\{\frac{\mu}{2\max\{a_1,a_2\},\lambda_1}\right\}.$
\end{lemma}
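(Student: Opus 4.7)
The plan is to verify the two conclusions separately. Monotonicity is immediate from the very definition of the update rule for $\lambda_{n+1}$: in the "otherwise" branch we have equality $\lambda_{n+1}=\lambda_n$, and in the first branch $\lambda_{n+1}$ is the minimum of $\lambda_n$ and another positive quantity, so $\lambda_{n+1}\le \lambda_n$ in either case. Therefore $\{\lambda_n\}$ is nonincreasing.

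For the lower bound, the main ingredient is the Lipschitz-like property in Assumption \ref{ass2}(3). I would apply it with $x=w_n$, $y=y_n$ and $w=z_n$, which, after rearrangement, yields
\begin{align*}
F(w_n,z_n)-F(w_n,y_n)-F(y_n,z_n)\le a_1\|w_n-y_n\|^2+a_2\|y_n-z_n\|^2.
\end{align*}
Bounding $a_1,a_2$ above by $\max\{a_1,a_2\}$ and factoring gives
\begin{align*}
F(w_n,z_n)-F(w_n,y_n)-F(y_n,z_n)\le \max\{a_1,a_2\}\bigl(\|w_n-y_n\|^2+\|z_n-y_n\|^2\bigr).
\end{align*}
Consequently, whenever the denominator in \eqref{eq1} is strictly positive, the candidate value satisfies
\begin{align*}
\frac{\mu\bigl(\|w_n-y_n\|^2+\|z_n-y_n\|^2\bigr)}{2\bigl(F(w_n,z_n)-F(w_n,y_n)-F(y_n,z_n)\bigr)} \ge \frac{\mu}{2\max\{a_1,a_2\}}.
\end{align*}

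Combining the two branches of \eqref{eq1}, I would conclude that in every case
\begin{align*}
\lambda_{n+1}\ge \min\!\left\{\lambda_n,\ \frac{\mu}{2\max\{a_1,a_2\}}\right\}.
\end{align*}
A straightforward induction on $n$ starting from $\lambda_1$ then gives $\lambda_n\ge \min\!\left\{\lambda_1,\ \tfrac{\mu}{2\max\{a_1,a_2\}}\right\}$ for all $n\ge 1$, which is the asserted lower bound. No step is really an obstacle; the only subtlety worth mentioning is that the Lipschitz-like condition must be stated on $\mathcal{H}$ (as in Assumption \ref{ass2}(3)) so that it applies to the iterate $z_n$, which belongs to the half-space $T_n$ rather than to $\mathcal{C}$.
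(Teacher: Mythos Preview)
Your argument is correct and is exactly the standard one: monotonicity is read off directly from the update rule \eqref{eq1}, and the lower bound follows from the Lipschitz-like inequality in Assumption~\ref{ass2}(3) applied to the triple $(w_n,y_n,z_n)$, together with a trivial induction. The paper does not give its own proof of this lemma; it simply quotes it from \cite{tta}, so there is nothing further to compare. Your closing remark that the Lipschitz-like condition must hold on $\mathcal{H}$ (not merely on $\mathcal{C}$) because $z_n\in T_n$ need not lie in $\mathcal{C}$ is a pertinent observation and is consistent with how Assumption~\ref{ass2}(3) is stated.
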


\begin{lemma}\label{llb}\cite{thong}
Let $\{z_n\}$ be a sequence generated by Algorithm  \ref{alg2} under Assumption \ref{ass2}. Then, for each $\bar{w}\in EP(F, \mathcal{C}),$ the following inequality holds:
\begin{align*}
\|z_n-\bar{w}\|^2\leq \|w_n-\bar{w}\|^2-\Big(1-\frac{\lambda_n\mu}{\lambda_{n+1}}\Big)\left[\|w_n-y_n\|^2+\|z_n-y_n\|^2\right].
		\end{align*}
	\end{lemma}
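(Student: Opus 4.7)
The plan is to combine two applications of the first-order optimality condition from Lemma \ref{lpp}, one at $y_n$ and one at $z_n$, with pseudomonotonicity of $F$ and the adaptive step-size rule \eqref{eq1}. Applying Lemma \ref{lpp} to the subproblem defining $y_n$ produces $\omega_n \in \partial_2 F(w_n,y_n)$ and $w^* = w_n - \lambda_n\omega_n - y_n \in N_{\mathcal{C}}(y_n)$. Since $\bar{w} \in \mathcal{C}$, the normal-cone inequality gives $\langle w_n - \lambda_n\omega_n - y_n,\, \bar{w} - y_n\rangle \leq 0$, so $\bar{w} \in T_n$. Testing the same relation against $z_n \in T_n$ and invoking the subgradient inequality $F(w_n,z_n) \geq F(w_n,y_n) + \langle \omega_n, z_n - y_n\rangle$ yields
\begin{equation*}
\langle w_n - y_n,\, z_n - y_n\rangle \leq \lambda_n\bigl[F(w_n,z_n) - F(w_n,y_n)\bigr].
\end{equation*}

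Next, applying Lemma \ref{lpp} to the subproblem defining $z_n$ over $T_n$ and repeating the same manipulations (with a subgradient $v \in \partial_2 F(y_n,z_n)$ and an element of $N_{T_n}(z_n)$) produces, for every $y \in T_n$,
\begin{equation*}
\langle w_n - z_n,\, y - z_n\rangle \leq \lambda_n\bigl[F(y_n,y) - F(y_n,z_n)\bigr].
\end{equation*}
Specialising to $y = \bar{w}$, which is legal by the previous step, and using pseudomonotonicity — $F(\bar{w},y_n) \geq 0 \Rightarrow F(y_n,\bar{w}) \leq 0$ — I obtain $\langle w_n - z_n,\, \bar{w} - z_n\rangle \leq -\lambda_n F(y_n,z_n)$.

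I then expand via Lemma \ref{lem1}(1): $\|z_n-\bar{w}\|^2 = \|w_n-\bar{w}\|^2 - \|w_n-z_n\|^2 + 2\langle w_n - z_n,\, \bar{w} - z_n\rangle$ and $\|w_n - z_n\|^2 = \|w_n - y_n\|^2 + 2\langle w_n - y_n,\, y_n - z_n\rangle + \|y_n - z_n\|^2$. Substituting the two inner-product bounds and collecting terms gives
\begin{equation*}
\|z_n-\bar{w}\|^2 \leq \|w_n-\bar{w}\|^2 - \|w_n-y_n\|^2 - \|z_n-y_n\|^2 + 2\lambda_n\bigl[F(w_n,z_n) - F(w_n,y_n) - F(y_n,z_n)\bigr].
\end{equation*}
Finally, the bracketed quantity is controlled by \eqref{eq1}: in the branch where it is positive, $\lambda_{n+1}$ is designed so that $F(w_n,z_n) - F(w_n,y_n) - F(y_n,z_n) \leq \tfrac{\mu}{2\lambda_{n+1}}\bigl(\|w_n-y_n\|^2 + \|z_n-y_n\|^2\bigr)$, while in the other branch the bracketed quantity is non-positive and $\lambda_{n+1} = \lambda_n$, so the same estimate holds trivially. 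Substitution yields the claimed inequality.

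The main obstacle is keeping the signs consistent when expanding $\|w_n-z_n\|^2$ so that the two inner-product bounds recombine correctly; one must also verify $\bar{w} \in T_n$ before specialising the second optimality estimate at $y = \bar{w}$, and cover both branches of the piecewise step-size rule \eqref{eq1} so as to produce a single uniform upper bound.
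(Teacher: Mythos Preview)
Your proof is correct. The paper does not supply its own proof of this lemma; it is quoted directly from \cite{thong}. The intermediate inequalities that the paper later cites from \cite[Lemma~3.2, equations~(6) and~(14)]{thong} in the proof of the main theorem---namely $\lambda_n F(y_n,y)\ge \lambda_n F(y_n,z_n)+\langle w_n-z_n,\,y-z_n\rangle$ for $y\in\mathcal{C}$ and $2F(y_n,z_n)\ge \tfrac{2}{\lambda_n}\langle w_n-y_n,\,z_n-y_n\rangle-\tfrac{\mu}{\lambda_{n+1}}\bigl[\|w_n-y_n\|^2+\|z_n-y_n\|^2\bigr]$---are exactly the consequences of your two optimality/subgradient steps combined with the step-size rule, so your argument coincides with the approach in the cited source.
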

	
\section{Convergence Analysis}\label{Se4}
\begin{lemma}\label{gtm5bdn}
Let $\{x_n\}$ be a sequence generated by Algorithm  \ref{alg2} under Assumption \ref{ass2} and  Assumption \ref{gtm5as1}. Then,  for  $\bar{w}\in EP(F,\mathcal{C}),$
\begin{eqnarray*}
\Gamma_{n+1}\le \Gamma_n-\epsilon\|x_{n+1}-x_n\|^2,
\end{eqnarray*} where $\Gamma_n=\|x_n-\bar{w}\|^2-\theta_n\|x_{n-1}-\bar{w}\|^2+\delta_n\|x_n-x_{n-1}\|^2$ and  $\delta_n=(1+\theta_n)\theta_n+\phi_n^{-1}(1-\phi_n)(1-\theta_n)\theta_n.$ 
\end{lemma}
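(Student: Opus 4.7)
The plan is to transform the update rule of Algorithm \ref{alg2} into a recursive descent inequality for the energy $\Gamma_n$. Starting from $x_{n+1} = (1-\phi_n)w_n + \phi_n z_n$, Lemma \ref{lem1}(2) gives
\[
\|x_{n+1} - \bar{w}\|^2 = (1-\phi_n)\|w_n - \bar{w}\|^2 + \phi_n\|z_n - \bar{w}\|^2 - \phi_n(1-\phi_n)\|w_n - z_n\|^2.
\]
Lemma \ref{llb} combined with Lemma \ref{lem2} (which makes $1 - \lambda_n\mu/\lambda_{n+1}$ eventually positive, tending to $1-\mu$) lets me replace $\|z_n - \bar{w}\|^2$ by $\|w_n - \bar{w}\|^2$, so that for $n$ large enough
\[
\|x_{n+1} - \bar{w}\|^2 \le \|w_n - \bar{w}\|^2 - \phi_n(1-\phi_n)\|w_n - z_n\|^2.
\]

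Next I would expand $\|w_n - \bar{w}\|^2$ via $w_n - \bar{w} = (1+\theta_n)(x_n - \bar{w}) - \theta_n(x_{n-1} - \bar{w})$ and Lemma \ref{lem1}(2) with $\alpha = 1+\theta_n$, obtaining
\[
\|w_n - \bar{w}\|^2 = (1+\theta_n)\|x_n - \bar{w}\|^2 - \theta_n\|x_{n-1} - \bar{w}\|^2 + (1+\theta_n)\theta_n\|x_n - x_{n-1}\|^2.
\]
Since $x_{n+1} - w_n = \phi_n(z_n - w_n)$, I rewrite $\phi_n(1-\phi_n)\|w_n - z_n\|^2 = (\phi_n^{-1} - 1)\|x_{n+1} - w_n\|^2$. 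Decomposing $x_{n+1} - w_n = (x_{n+1} - x_n) - \theta_n(x_n - x_{n-1})$ and using the Young-type estimate $-2\theta_n\langle x_{n+1} - x_n,\, x_n - x_{n-1}\rangle \ge -\theta_n\|x_{n+1} - x_n\|^2 - \theta_n\|x_n - x_{n-1}\|^2$ yields the lower bound
\[
\|x_{n+1} - w_n\|^2 \ge (1-\theta_n)\|x_{n+1} - x_n\|^2 - \theta_n(1-\theta_n)\|x_n - x_{n-1}\|^2.
\]

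Assembling these estimates and forming $\Gamma_{n+1} - \Gamma_n + \epsilon\|x_{n+1} - x_n\|^2$, the coefficient of $\|x_n - x_{n-1}\|^2$ collapses to $(1+\theta_n)\theta_n - \delta_n + (\phi_n^{-1} - 1)(1-\theta_n)\theta_n = 0$ by the very definition of $\delta_n$. What remains is
\[
\Gamma_{n+1} - \Gamma_n + \epsilon\|x_{n+1} - x_n\|^2 \le (\theta_n - \theta_{n+1})\|x_n - \bar{w}\|^2 + \bigl[\delta_{n+1} + \epsilon - (\phi_n^{-1} - 1)(1-\theta_n)\bigr]\|x_{n+1} - x_n\|^2.
\]
The first bracket is nonpositive by monotonicity of $\{\theta_n\}$. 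Since $\theta_n \le \theta_{n+1}$ and $\phi_n^{-1} > 1$, and since $\delta_{n+1} = 2\theta_{n+1}^2 + \phi_{n+1}^{-1}\theta_{n+1}(1-\theta_{n+1})$, it suffices to establish
\[
2\theta_{n+1}^2 + \phi_{n+1}^{-1}\theta_{n+1}(1-\theta_{n+1}) + \epsilon \le (\phi_n^{-1} - 1)(1 - \theta_{n+1}).
\]

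The main obstacle is verifying this scalar inequality under Assumption \ref{gtm5as1}. With the shorthand $t = \theta_{n+1}$, $p = \phi_n^{-1}$, $q = \phi_{n+1}^{-1}$, it rearranges to $(2-q)t^2 + (p+q-1)t \le p-1-\epsilon$, whose form depends on the sign of $2-q$. In case (i) ($q > 2$ by Remark \ref{rmk}(a)), the associated upward parabola $(q-2)t^2 - (p+q-1)t + (p-1-\epsilon)$ has positive discriminant by Remark \ref{rmk}(b), with $\beta_n$ as its smaller root, so the hypothesis $\theta_{n+1} \le \beta_n$ places $t$ where the parabola is nonnegative. Case (ii) is the degenerate situation $p = q = 2$, which reduces to $3t \le 1-\epsilon$, exactly matching the assumption. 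In case (iii) ($q < 2$), the upward parabola $(2-q)t^2 + (p+q-1)t - (p-1-\epsilon)$ has larger positive root $\sqrt{p_n^2 + q_n} - p_n$ (a direct algebraic check matches the definitions \eqref{3.3}--\eqref{3.3b}), and $\theta_{n+1} \le \sqrt{p_n^2 + q_n} - p_n$ places $t$ where the parabola is nonpositive. Combining the three cases delivers $\Gamma_{n+1} \le \Gamma_n - \epsilon\|x_{n+1} - x_n\|^2$.
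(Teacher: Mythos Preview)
Your proof is correct and follows essentially the same route as the paper's. The only cosmetic difference is in how you bound $\|x_{n+1}-w_n\|^2$ from below: you apply Young's inequality to the cross term of $(x_{n+1}-x_n)-\theta_n(x_n-x_{n-1})$, whereas the paper rewrites $x_{n+1}-w_n=(x_{n+1}-2x_n+x_{n-1})+(1-\theta_n)(x_n-x_{n-1})$, uses the polarization identity, and then drops the nonnegative term $\theta_n\|x_{n+1}-2x_n+x_{n-1}\|^2$. These two manipulations are algebraically equivalent (the slack in your Young step is precisely $\theta_n\|x_{n+1}-2x_n+x_{n-1}\|^2$) and yield the identical lower bound $(1-\theta_n)\|x_{n+1}-x_n\|^2-\theta_n(1-\theta_n)\|x_n-x_{n-1}\|^2$; from there your assembly into $\Gamma_{n+1}-\Gamma_n$ and your three-case quadratic analysis match the paper's argument line by line.
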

\begin{proof}
Let $\bar{w}\in EP(F,\mathcal{C}).$ From the definition of $w_n$ in Step 1 and Lemma \ref{lem1} (2), we have 
\begin{align}\label{aa1}\nonumber
\|w_n-\bar{w}\|^2&=\|x_n+\theta_n(x_n-x_{n-1})-\bar{w}\|^2\\ \nonumber
&=\|(1+\theta_n)(x_n-\bar{w})-\theta_n(x_{n-1}-\bar{w})\|^2\\ 
&=(1+\theta_n)\|x_n-\bar{w}\|^2-\theta_n\|x_{n-1}-\bar{w}\|^2+(1+\theta_n)\theta_n\|x_n-x_{n-1}\|^2.
\end{align}
Also, from the definition of $x_{n+1}$ and Lemma \ref{llb}, we have 
\begin{align*}\nonumber
\|x_{n+1}-\bar{w}\|^2&=\|(1-\phi_n)w_n+\phi_nz_n-\bar{w}\|^2\\ \nonumber
&=\|(1-\phi_n)(w_n-\bar{w})+\phi_n(z_n-\bar{w})\|^2\\ \nonumber
&=(1-\phi_n)\|w_n-\bar{w}\|^2+\phi_n\|z_n-\bar{w}\|^2-\phi_n(1-\phi_n)\|z_n-w_n\|^2\\ \nonumber
&\le(1-\phi_n)\|w_n-\bar{w}\|^2+\phi_n\|w_n-\bar{w}\|^2-\phi_n\left(1-\mu\frac{\lambda_n}{\lambda_{n+1}}\right)\left[\|w_n-y_n\|^2+\|z_n-y_n\|^2\right]\\ \nonumber&\;\;\;-\phi_n(1-\phi_n)\|z_n-w_n\|^2\\ 
&=\|w_n-\bar{w}\|^2-\phi_n\left(1-\mu\frac{\lambda_n}{\lambda_{n+1}}\right)\left[\|w_n-y_n\|^2+\|z_n-y_n\|^2\right]-\phi_n(1-\phi_n)\|z_n-w_n\|^2.
\end{align*}
\noindent By Lemma \ref{lem2}, we have 
\begin{align}\label{gtm5a5}
	\lim\limits_{n\rightarrow \infty}\Big(1-\frac{\lambda_n\mu}{\lambda_{n+1}}\Big)=1-\mu>0.
\end{align}Thus,
there exists $n_0\geq 1$ such that for all $n\geq n_0,$ $\Big(1-\frac{\lambda_n\mu}{\lambda_{n+1}}\Big)>0.$
Hence,
\begin{eqnarray}\label{aa2}
\|x_{n+1}-\bar{w}\|^2\le  \|w_n-\bar{w}\|^2-\phi_n(1-\phi_n)\|z_n-w_n\|^2.
\end{eqnarray}
\noindent From the definition of $x_{n+1}$, we have 
\begin{align*}
z_n-w_n={\phi_n}^{-1}(x_{n+1}-w_n).
\end{align*}Thus,
\begin{eqnarray}\label{aa3}
\|z_n-w_n\|^2&=&{\phi_n^{-2}}\|x_{n+1}-w_n\|^2\nonumber\\ 
&=&{\phi_n^{-2}}\|x_{n+1}-x_n-(x_n-x_{n-1})+(1-\theta_n)(x_n-x_{n-1})\|^2\nonumber\\
	&=& {\phi_n^{-2}}\|x_{n+1}-2x_n+x_{n-1}\|^2+\phi_n^{-2}(1-\theta_n)^2\|x_n-x_{n-1}\|^2\nonumber\\
	&&+2{\phi_n^{-2}}(1-\theta_n)\langle x_{n+1}-2x_n+x_{n-1}, x_n-x_{n-1}\rangle \nonumber\\
	&=& {\phi_n^{-2}}\|x_{n+1}-2x_n+x_{n-1}\|^2+\phi_n^{-2}(1-\theta_n)^2\|x_n-x_{n-1}\|^2\nonumber\\
	&&+{\phi_n^{-2}}(1-\theta_n)\left[\|x_{n+1}-x_{n}\|^2- \|x_{n}-x_{n-1}\|^2-\|x_{n+1}-2x_{n}+x_{n-1}\|^2\right]\nonumber\\
	&=& \phi_n^{-2}\theta_n \|x_{n+1}-2x_n+x_{n-1}\|^2+\phi_n^{-2}(1-\theta_n)^2\|x_n-x_{n-1}\|^2\nonumber\\
	&&+\phi_n^{-2}(1-\theta_n)\left[\|x_{n+1}-x_{n}\|^2- \|x_{n}-x_{n-1}\|^2\right]\nonumber\\ 
&\ge &\phi_n^{-2}(1-\theta_n)^2\|x_n-x_{n-1}\|^2+\phi_n^{-2}(1-\theta_n)\Big[\|x_{n+1}-x_n\|^2-\|x_n-x_{n-1}\|^2\Big].
\end{eqnarray} Substituting \eqref{aa1} and \eqref{aa3} into \eqref{aa2}, we have
\begin{align}\label{bb}\nonumber
\|x_{n+1}-\bar{w}\|^2&\le (1+\theta_n)\|x_n-\bar{w}\|^2-\theta_n\|x_{n-1}-\bar{w}\|^2+(1+\theta_n)\theta_n\|x_n-x_{n-1}\|^2\\\nonumber 
&\;\;\;\;-\phi_n(1-\phi_n)\Big(\phi_n^{-2}(1-\theta_n)^2\|x_n-x_{n-1}\|^2+\phi_n^{-2}(1-\theta_n)\Big[\|x_{n+1}-x_n\|^2-\|x_n-x_{n-1}\|^2\Big]\Big)\\ \nonumber
&=\|x_n-\bar{w}\|^2+\theta_n(\|x_n-\bar{w}\|^2-\|x_{n-1}-\bar{w}\|^2)-\phi_n^{-1}(1-\phi_n)(1-\theta_n)\|x_{n+1}-x_n\|^2\\
&\;\;\;\;+\delta_n\|x_n-x_{n-1}\|^2,
\end{align}where
$\delta_n=(1+\theta_n)\theta_n+\phi_n^{-1}(1-\phi_n)(1-\theta_n)\theta_n.$

\noindent  This implies that
\begin{eqnarray}\label{aa4}
&\|x_{n+1}-\bar{w}\|^2-\|x_n-\bar{w}\|^2-\theta_n(\|x_n-\bar{w}\|^2-\|x_{n-1}-\bar{w}\|^2)-\delta_n\|x_n-x_{n-1}\|^2+\delta_{n+1}\|x_{n+1}-x_{n}\|^2\\ \nonumber
&\le -\Big(\phi_n^{-1}(1-\phi_n)(1-\theta_n)-\delta_{n+1}\Big)\|x_{n+1}-x_n\|^2.
\end{eqnarray}
Using the fact that $\theta_n\le \theta_{n+1}$ and \eqref{aa4}, we have 
\begin{align*}\nonumber
-\Big(\phi_n^{-1}(1-\phi_n)(1-\theta_n)-\delta_{n+1}\Big)\|x_{n+1}-x_n\|^2
&\ge\|x_{n+1}-\bar{w}\|^2-\|x_n-\bar{w}\|^2-\theta_n(\|x_n-\bar{w}\|^2-\|x_{n-1}-\bar{w}\|^2)\\
&\;\;\;\;-\delta_n\|x_n-x_{n-1}\|^2+\delta_{n+1}\|x_{n+1}-x_n\|^2\\
&\ge \|x_{n+1}-\bar{w}\|^2-\|x_n-\bar{w}\|^2-\theta_{n+1}\|x_n-\bar{w}\|^2\\
&\;\;\;\;+\theta_n\|x_{n-1}-\bar{w}\|^2-\delta_n\|x_n-x_{n-1}\|^2+\delta_{n+1}\|x_{n+1}-x_n\|^2,
\end{align*}which implies that 
\begin{align}\label{3.4}
\Gamma_{n+1}\leq \Gamma_n-\Big(\phi_n^{-1}(1-\phi_n)(1-\theta_n)-\delta_{n+1}\Big)\|x_{n+1}-x_n\|^2,
\end{align}		where $\Gamma_n=\|x_n-\bar{w}\|^2-\theta_n\|x_{n-1}-\bar{w}\|^2+\delta_n\|x_n-x_{n-1}\|^2.$ 

\noindent Now, observe for $\epsilon>0,$ we have 
\begin{align}\label{aa6}\nonumber
\phi_n^{-1}(1-\phi_n)(1-\theta_n)-\delta_{n+1}-\epsilon&=\phi_n^{-1}(1-\phi_n)(1-\theta_n)-\theta_{n+1}-\theta_{n+1}^2\\ \nonumber&\;\;\;\;+\phi_{n+1}^{-1}(1-\phi_{n+1})(\theta_{n+1}-1)\theta_{n+1}-\epsilon\\ \nonumber
&\ge\phi_n^{-1}(1-\phi_n)(1-\theta_{n+1})-\theta_{n+1}-\theta_{n+1}^2\\ \nonumber&\;\;\;\;+\phi_{n+1}^{-1}(1-\phi_{n+1})(\theta_{n+1}-1)\theta_{n+1}-\epsilon\\
&=-\Big(2-\phi_{n+1}^{-1}\Big)\theta_{n+1}^2-\Big(\phi_n^{-1}+\phi_{n+1}^{-1}-1\Big)\theta_{n+1}+\phi_n^{-1}-1-\epsilon.
\end{align} Now, we consider three cases.
		
\noindent {\bf Case 1:} Suppose $\phi_n\in (0, 0.5).$ Then, from the condition $\theta_{n+1}\le \frac{1}{2}\frac{1}{\phi_{n+1}^{-1}-2}\Big(\phi_n^{-1}+\phi_{n+1}^{-1}-1-\triangle_n\Big)$ in \eqref{3.1}, we get
\begin{align*}
\triangle_n\le \phi_n^{-1}+\phi_{n+1}^{-1}-1-2(\phi_{n+1}^{-1}-2)\theta_{n+1}.
\end{align*} By Remark \ref{rmk} (b), we have that $\triangle_n\ge0.$ Hence,
\begin{align*}
\triangle_n^2&\le \Big[\Big(\phi_n^{-1}+\phi_{n+1}^{-1}-1\Big)-\Big(2(\phi_{n+1}^{-1}-2)\theta_{n+1}\Big)\Big]^2\\
&=(\phi_n^{-1}+\phi_{n+1}^{-1}-1)^2-4(\phi_n^{-1}+\phi_{n+1}^{-1}-1)(\phi_{n+1}^{-1}-2)\theta_{n+1}+4(\phi_{n+1}^{-1}-2)^2\theta_{n+1}^2.
\end{align*}That is,
\begin{eqnarray*}
&(\phi_n^{-1}+\phi_{n+1}^{-1}-1)^2-4(\phi_n^{-1}-1-\epsilon)(\phi_{n+1}^{-1}-2)\\&\le (\phi_n^{-1}+\phi_{n+1}^{-1}-1)^2-4(\phi_n^{-1}+\phi_{n+1}^{-1}-1)(\phi_{n+1}^{-1}-2)\theta_{n+1}+4(\phi_{n+1}^{-1}-2)^2\theta_{n+1}^2.
\end{eqnarray*}Hence,
\begin{eqnarray*}
&-\Big(\phi_n^{-1}+\phi_{n+1}^{-1}-1\Big)(\phi_{n+1}^{-1}-2)\theta_{n+1}+(\phi_{n+1}^{-1}-2)^2\theta_{n+1}^2+(\phi_n^{-1}-1-\epsilon)(\phi_{n+1}^{-1}-2)\geq 0.
\end{eqnarray*}Since $\ \phi_{n+1}^{-1}-2>0,$ we obtain
\begin{eqnarray*}
	&-\Big(\phi_n^{-1}+\phi_{n+1}^{-1}-1\Big)\theta_{n+1}+(\phi_{n+1}^{-1}-2)\theta_{n+1}^2+(\phi_n^{-1}-1-\epsilon)\geq 0.
\end{eqnarray*}That is,
\begin{align}\label{aa7}
-(2-\phi_{n+1}^{-1})\theta_{n+1}^2-\Big(\phi_n^{-1}+\phi_{n+1}^{-1}-1\Big)\theta_{n+1}+\phi_n^{-1}-1-\epsilon\geq 0.
\end{align}Using \eqref{aa7} in \eqref{aa6}, we get
\begin{eqnarray*}
&\phi_n^{-1}(1-\phi_n)(1-\theta_n)-\delta_{n+1}-\epsilon\geq -(2-\phi_{n+1}^{-1})\theta_{n+1}^2-\Big(\phi_n^{-1}+\phi_{n+1}^{-1}-1\Big)\theta_{n+1}+\phi_n^{-1}-1-\epsilon\geq 0,
\end{eqnarray*}which implies
\begin{align*}
-\Big(\phi_n^{-1}(1-\phi_n)(1-\theta_n)-\delta_{n+1}\Big)\le -\epsilon.
		\end{align*}
{\bf Case 2:} Suppose $\phi_n\equiv0.5.$ Then from \eqref{aa6}, we obtain
\begin{align*}
\phi_n^{-1}(1-\phi_n)(1-\theta_n)-\delta_{n+1}-\epsilon&\ge -3\theta_{n+1}+1-\epsilon\\
	&\ge 0,
\end{align*}since by Assumption \ref{gtm5as1} (ii), $\theta_{n+1}\le \frac{1-\epsilon}{3}.$ Hence, 
 \begin{align*}
 -\Big(\phi_n^{-1}(1-\phi_n)(1-\theta_n)-\delta_{n+1}\Big)\le -\epsilon.
\end{align*}
{\bf Case 3:} Suppose $\phi_n\in(0.5,1-\epsilon].$ Then from the condition $\theta_{n+1}\leq \sqrt{p_n^2+q_n}-p_n$,  we have
\begin{align*}
\Big(\theta_{n+1}+p_n\Big)^2\leq p_n^2+q_n,
\end{align*}which implies  that 
\begin{align*}
\theta_{n+1}^2+2p_n\theta_{n+1}-q_n\le 0.
\end{align*}Now, using \eqref{3.3} and \eqref{3.3b}, we get
\begin{eqnarray*}
&\theta_{n+1}^2+\frac{1}{2-\phi_{n+1}^{-1}}\Big(\phi_n^{-1}+\phi_{n+1}^{-1}-1\Big)\theta_{n+1}-\frac{1}{2-\phi_{n+1}^{-1}}(\phi_n^{-1}-1-\epsilon)\le 0.
\end{eqnarray*}Hence,
\begin{align*}
(2-\phi_{n+1}^{-1})\theta_{n+1}^2+(\phi_n^{-1}+\phi_{n+1}^{-1}-1)\theta_{n+1}-(\phi_n^{-1}-1-\epsilon)\le 0,
\end{align*} which implies 
\begin{align*}
-(2-\phi_{n+1}^{-1})\theta_{n+1}^2-(\phi_n^{-1}-\phi_{n+1}^{-1}-1)\theta_{n+1}+(\phi_n^{-1}-1-\epsilon)\ge 0.
\end{align*} Thus, by \eqref{aa6}, we obtain
\begin{align*}
	-\Big(\phi_n^{-1}(1-\phi_n)(1-\theta_n)-\delta_{n+1}\Big)\le -\epsilon.
\end{align*} Therefore, in all cases, we have established that
\begin{align*}
	-\Big(\phi_n^{-1}(1-\phi_n)(1-\theta_n)-\delta_{n+1}\Big)\le -\epsilon.
\end{align*}
Now, using this and  \eqref{3.4}, we get
\begin{align*}
\Gamma_{n+1}\le \Gamma_n-\epsilon\|x_{n+1}-x_n\|^2.
\end{align*}
\end{proof}

\begin{lemma}\label{lem3}
Let $\{x_n\}$ be generated  by Algorithm \ref{alg2} under Assumption \ref{ass2} and Assumption \ref{gtm5as1}. Then $\lim\limits_{n\to\infty} \|x_n-\bar{w}\|$ exists for all $\bar{w} \in EP(F, \mathcal C)$.
\end{lemma}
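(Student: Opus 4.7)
The plan is to feed the descent inequality of Lemma \ref{gtm5bdn} into the Alvarez-type criterion Lemma \ref{lem5}. The bridge between the two is the estimate
\begin{align*}
a_{n+1}\le a_n+\theta_n(a_n-a_{n-1})+\delta_n\|x_n-x_{n-1}\|^2,
\end{align*}
already established as \eqref{bb} inside the proof of Lemma \ref{gtm5bdn}, where $a_n:=\|x_n-\bar w\|^2$. Since this has exactly the form required by Lemma \ref{lem5} with $\gamma_n=a_n$ and $\psi_n=\theta_n\le \theta<1$ (the uniform upper bound being guaranteed by Assumption \ref{gtm5as1}), the only non-trivial task is to verify that $\sum_{n=1}^{\infty}\delta_n\|x_n-x_{n-1}\|^2<\infty$, i.e.\ that $t_n:=\delta_n\|x_n-x_{n-1}\|^2$ is summable.

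First I would note that by Lemma \ref{gtm5bdn} the sequence $\{\Gamma_n\}$ is non-increasing, so $\Gamma_n\le \Gamma_1$ for every $n$. Since $\delta_n\|x_n-x_{n-1}\|^2\ge 0$, this rearranges to $a_n\le \theta_n a_{n-1}+\Gamma_1$, and the uniform bound $\theta_n\le\theta<1$ allows an inductive bootstrap yielding $a_n\le \theta^{n-1}a_1+\Gamma_1/(1-\theta)$; hence $\{a_n\}$ is bounded above by some constant $M>0$. In turn, this furnishes the lower bound $\Gamma_{n+1}\ge -\theta_{n+1}a_n\ge -\theta M$, so the monotone sequence $\{\Gamma_n\}$ is also bounded below and therefore converges.

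Telescoping the descent inequality in Lemma \ref{gtm5bdn} then yields
\begin{align*}
\epsilon\sum_{n=1}^{\infty}\|x_{n+1}-x_n\|^2\le \Gamma_1-\lim_{n\to\infty}\Gamma_n<\infty.
\end{align*}
Since $\delta_n=(1+\theta_n)\theta_n+\phi_n^{-1}(1-\phi_n)(1-\theta_n)\theta_n$ is bounded (using $\theta_n\in[0,\theta]$ and $\phi_n$ lying in a bounded subinterval of $(0,1)$ bounded away from $0$), it follows that $\sum_{n=1}^{\infty}\delta_n\|x_n-x_{n-1}\|^2<\infty$. Applying Lemma \ref{lem5} to \eqref{bb} now gives the existence of $\lim_{n\to\infty}a_n=\lim_{n\to\infty}\|x_n-\bar w\|^2$, whence $\lim_{n\to\infty}\|x_n-\bar w\|$ exists.

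The main obstacle is the lower boundedness of $\Gamma_n$: it is not intrinsically nonnegative because of the subtracted term $-\theta_n\|x_{n-1}-\bar w\|^2$, so the descent inequality alone does not yet produce a convergent $\{\Gamma_n\}$ or a summable $\{\|x_{n+1}-x_n\|^2\}$. The inductive argument in the second paragraph, which exploits the strict bound $\theta<1$ built into Assumption \ref{gtm5as1} to obtain an \emph{a priori} bound on $\|x_n-\bar w\|$, is precisely what closes this loop and reduces the statement to a standard application of Alvarez's lemma.
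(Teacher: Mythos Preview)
Your proposal is correct and follows essentially the same route as the paper: both use the monotonicity of $\{\Gamma_n\}$ from Lemma~\ref{gtm5bdn} together with the bound $\gamma_n-\theta_n\gamma_{n-1}\le\Gamma_1$ and the geometric iteration $\gamma_n\le\theta\gamma_{n-1}+\Gamma_1$ to get an a priori bound on $\|x_n-\bar w\|^2$, then telescope to obtain $\sum\|x_{n+1}-x_n\|^2<\infty$, and finally feed inequality~\eqref{bb} into Lemma~\ref{lem5}. Your version is in fact slightly more explicit than the paper's, since you spell out the lower bound on $\Gamma_n$ and the boundedness of $\delta_n$ (the paper tacitly uses both when invoking Lemma~\ref{lem5}).
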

\begin{proof}
By Lemma \ref{gtm5bdn}, we see that $\{\Gamma_n\}$ is nonincreasing.
 Now, let $\gamma_n:=\|x_n-\bar{w}\|^2.$ Then, $\Gamma_n:=\gamma_n-\theta_n\gamma_{n-1}+\delta_n\|x_n-x_{n-1}\|^2.$ Thus,
\begin{align*}
\gamma_n-\theta_n\gamma_{n-1}\le \Gamma_n\le \Gamma_1,
\end{align*}which implies that
\begin{align*}
\gamma_n&\le \theta_n\gamma_{n-1}+\Gamma_1\\
&\le \theta\gamma_{n-1}+\Gamma_1\hspace{0.5cm}\text{(for some} ~~~~\theta<1, \text{since}~~~~~ \theta_n<1)\\
&\le \theta(\theta\gamma_{n-2}+\Gamma_1)+\Gamma_1\\
&=\theta^2\gamma_{n-2}+\theta\Gamma_1+\Gamma_1\\
&\vdots\\
&\leq \theta^n\gamma_1+\theta^{n-1}\Gamma_1+\cdots+\theta\Gamma_1+\Gamma_1\le \theta^n\gamma_1+\frac{\Gamma_1}{1-\theta}.
\end{align*}
Hence, for $j\le n-1,$ we obtain from Lemma \ref{gtm5bdn} that 
\begin{align*}
	\epsilon\sum\limits_{j=1}^{n-1}\|x_{j+1}-x_j\|^2\leq \Gamma_1-\Gamma_n\le \Gamma_1+\theta_n\gamma_{n-1}\le \Gamma_1+\theta^{n-1}\gamma_1+\frac{\Gamma_1}{1-\theta}.
\end{align*}
Thus we have that $\sum\limits_{j=1}^{n-1}\|x_{j+1}-x_j\|^2$ is  bounded for all $n.$ Hence, \begin{align}\label{bbd}\sum\limits_{n=1}^{+\infty}\|x_{n+1}-x_n\|^2<+\infty.\end{align}\noindent Now, from \eqref{bb} we obtain
\begin{align*}
\|x_{n+1}-\bar{w}\|^2\le \|x_n-\bar{w}\|^2+\theta_n\Big(\|x_n-\bar{w}\|^2-\|x_{n-1}-\bar{w}\|^2\Big)+\delta_n\|x_n-x_{n-1}\|^2.
\end{align*} From the previous inequality, \eqref{bbd}  and Lemma \ref{lem5}, we have that $\lim\limits_{n\to+\infty}\|x_n-\bar{w}\|$ exists. This implies that $\{x_n\}$ is bounded.
\end{proof}

\hfill

\begin{theorem}
	Let $\{x_n\}$  be generated  by Algorithm \ref{alg2} under Assumption \ref{ass2} and Assumption \ref{gtm5as1}. Then $\{x_n\}$ converges weakly to  $\bar{w}\in EP(F,\mathcal{C}).$
\end{theorem}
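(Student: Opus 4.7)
I will invoke Opial's lemma (Lemma \ref{lem4}) with the closed convex set there taken to be $EP(F,\mathcal{C})$. Its condition (a), the existence of $\lim_{n\to\infty}\|x_n-\bar{w}\|$ for every $\bar{w}\in EP(F,\mathcal{C})$, is already furnished by Lemma \ref{lem3}. Hence the whole task is to verify condition (b): every weak sequential cluster point of $\{x_n\}$ belongs to $EP(F,\mathcal{C})$.

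First I harvest several vanishing quantities. From $\sum_{n\ge 1}\|x_{n+1}-x_n\|^2<\infty$ in Lemma \ref{lem3} one gets $\|x_{n+1}-x_n\|\to 0$, and since $w_n-x_n=\theta_n(x_n-x_{n-1})$ with $\theta_n\le\theta<1$ also $\|w_n-x_n\|\to 0$, so $\{x_n\}$ and $\{w_n\}$ share all weak cluster points. Extracting from the chain of estimates inside the proof of Lemma \ref{gtm5bdn} the inequality
\begin{equation*}
\phi_n\Bigl(1-\tfrac{\mu\lambda_n}{\lambda_{n+1}}\Bigr)\bigl[\|w_n-y_n\|^2+\|z_n-y_n\|^2\bigr]+\phi_n(1-\phi_n)\|z_n-w_n\|^2\le \|w_n-\bar{w}\|^2-\|x_{n+1}-\bar{w}\|^2,
\end{equation*}
whose right-hand side tends to $0$ because $\|w_n-\bar{w}\|$ and $\|x_{n+1}-\bar{w}\|$ share the common limit $\lim_n\|x_n-\bar{w}\|$, and using $1-\mu\lambda_n/\lambda_{n+1}\to 1-\mu>0$ from \eqref{gtm5a5} together with $\phi_n$ bounded away from $0$ and $1$, I deduce $\|w_n-y_n\|\to 0$, $\|y_n-z_n\|\to 0$ and $\|w_n-z_n\|\to 0$.

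Now pick any subsequence $x_{n_k}\rightharpoonup x^*$; then $w_{n_k},y_{n_k},z_{n_k}\rightharpoonup x^*$, and $x^*\in\mathcal{C}$ since $y_n\in\mathcal{C}$ and $\mathcal{C}$ is weakly closed. To prove $x^*\in EP(F,\mathcal{C})$, I apply Lemma \ref{lpp} to the subproblem defining $y_n$: there exist $\omega_n\in\partial_2 F(w_n,y_n)$ and an element of $N_{\mathcal{C}}(y_n)$ equal to $w_n-y_n-\lambda_n\omega_n$. Combining the associated normal-cone inequality with the subgradient characterization of $\omega_n$ yields, for every $y\in\mathcal{C}$,
\begin{equation*}
F(w_n,y)\ge F(w_n,y_n)+\tfrac{1}{\lambda_n}\langle w_n-y_n,\,y-y_n\rangle,
\end{equation*}
where $\lambda_n$ is bounded below by Lemma \ref{lem2}. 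Using the subgradient inequality at $w_n$ together with $F(w_n,w_n)=0$ gives $F(w_n,y_n)\le\langle\omega_n,\,y_n-w_n\rangle$, so a uniform bound on $\|\omega_n\|$ (obtainable from the Lipschitz-like condition applied to triples anchored at $w_n$) forces $\limsup_n F(w_n,y_n)\le 0$, while a matching lower bound for $F(w_n,y_n)$ also flows from the Lipschitz-like inequality; the inner-product term vanishes thanks to $\|w_n-y_n\|\to 0$ and boundedness of $\{y_n\}$. Passing to the limit along the subsequence and using continuity of $F(\cdot,y)$ from Assumption \ref{ass2}(5) produces $F(x^*,y)\ge 0$ for every $y\in\mathcal{C}$, that is $x^*\in EP(F,\mathcal{C})$.

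The principal obstacle is this final limit passage: one must obtain uniform control on the subgradients $\omega_n$ so that $F(w_n,y_n)\to 0$, and then move $F(w_n,y)$ to $F(x^*,y)$ under only weak convergence of $w_n$, which is exactly where the continuity assumption on $F(\cdot,y)$ and the Lipschitz-like estimate must interact delicately. Once done, both Opial hypotheses are in place and Lemma \ref{lem4} delivers the weak convergence of $\{x_n\}$ to a point of $EP(F,\mathcal{C})$.
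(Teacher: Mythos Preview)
Your overall architecture---Opial's lemma with condition (a) from Lemma \ref{lem3} and condition (b) via a cluster-point argument---matches the paper exactly, and your derivation of $\|x_{n+1}-x_n\|,\|w_n-x_n\|,\|w_n-y_n\|,\|z_n-y_n\|,\|z_n-w_n\|\to 0$ is essentially equivalent to the paper's (the paper gets $\|z_n-w_n\|=\phi_n^{-1}\|x_{n+1}-w_n\|\to 0$ first and then invokes Lemma \ref{llb} for $\|w_n-y_n\|\to 0$, whereas you extract all three at once from the pre-\eqref{aa2} estimate; both work).

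The genuine gap is in your cluster-point step. You base everything on the \emph{$y_n$-subproblem}, arriving at
\[
F(w_n,y)\ \ge\ F(w_n,y_n)+\tfrac{1}{\lambda_n}\langle w_n-y_n,\,y-y_n\rangle,\qquad y\in\mathcal{C},
\]
and then need $\liminf_k F(w_{n_k},y_{n_k})\ge 0$. Your two proposed ingredients are not justified: (i) the Lipschitz-like condition on $F$ is a bifunction inequality and does \emph{not} by itself produce a uniform bound on $\|\omega_n\|$ for $\omega_n\in\partial_2 F(w_n,y_n)$ (local boundedness of subdifferentials of convex functions is a finite-dimensional fact, not available in general Hilbert space without extra hypotheses); and (ii) the ``matching lower bound'' for $F(w_n,y_n)$ from the Lipschitz-like inequality only yields $F(w_n,y_n)\ge -F(y_n,w_n)-(a_1+a_2)\|w_n-y_n\|^2$, which pushes the problem to bounding $F(y_n,w_n)$---and $w_n$ need not lie in $\mathcal{C}$, so neither pseudomonotonicity nor the optimality condition helps there. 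As written, the argument does not close.

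The paper avoids this entirely by working with the \emph{$z_n$-subproblem} instead: from the optimality of $z_n$ on $T_n\supset\mathcal{C}$ one has (see \eqref{p12})
\[
\lambda_n F(y_n,y)\ \ge\ \lambda_n F(y_n,z_n)+\langle w_n-z_n,\,y-z_n\rangle,\qquad y\in\mathcal{C},
\]
and a concrete lower bound \eqref{p11} for $F(y_n,z_n)$ in terms of $\langle w_n-y_n,z_n-y_n\rangle$ and $\|w_n-y_n\|^2+\|z_n-y_n\|^2$, both of which you have already shown vanish. Combining gives $\liminf_k \lambda_{n_k}F(y_{n_k},y)\ge 0$ directly, and then the continuity assumption on $F(\cdot,y)$ is invoked to pass to $F(x^*,y)\ge 0$. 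So the fix is to switch from the $y_n$-optimality to the $z_n$-optimality and use the explicit estimates \eqref{p11}--\eqref{p12}; your own vanishing results then finish the job without any appeal to subgradient bounds.
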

\begin{proof}
 \noindent From \eqref{bbd}, we have 
\begin{align}\label{aaf}
\lim\limits_{n \rightarrow \infty}\|x_{n+1}-x_n\|=0.
\end{align}
Also,  we have
\begin{align}\label{mnm2}\nonumber
\|x_n-w_n\|&=\|x_n-(x_n+\theta_n(x_n-x_{n-1}))\|\\
&\le \|x_n-x_{n-1}\|\to0,~~~\mbox{as}~~~n\to \infty.
\end{align}From \eqref{aaf} and \eqref{mnm2}, we get
\begin{eqnarray*}
\lim\limits_{n \rightarrow \infty}\|x_{n+1}-w_n\|\to 0, ~~~\mbox{as}~~~n\to\infty.
\end{eqnarray*}Hence,
\begin{eqnarray}\label{mnm3}
\lim\limits_{n \rightarrow \infty}\|z_n-w_n\|=\lim\limits_{n \rightarrow \infty}\phi^{-1}_n\|x_{n+1}-w_n\|=0.
\end{eqnarray}
From Lemma \ref{llb}, we get
\begin{align*}\nonumber
\Big(1-\mu\frac{\lambda_n}{\lambda_{n+1}}\Big)\|w_n-y_n\|^2&\le \|w_n-\bar{w}\|^2-\|z_n-\bar{w}\|^2\\ \nonumber
&\le \Big(\|w_n-\bar{w}\|+\|z_n-\bar{w}\|\Big)\Big(\|w_n-\bar{w}\|-\|z_n-\bar{w}\|\Big)\\
&\le \Big(\|w_n-\bar{w}\|+\|z_n-\bar{w}\|\Big)\|z_n-w_n\|\to 0, ~~~\mbox{as}~~~~n\to\infty.
\end{align*} Hence, 
\begin{eqnarray}\label{af1}
\lim\limits_{n \rightarrow \infty}\|w_n-y_n\|=0.
\end{eqnarray}
Furthermore, we have 
\begin{align*}
0\le \|x_n-y_n\|\le \|x_n-w_n\|+\|w_n-y_n\|\to 0,~~~\mbox{as}~~~n\to \infty.
\end{align*}From \eqref{aaf} to \eqref{af1}  we have 
\begin{align}\label{af2}
\|x_{n+1}-y_n\|\to 0, ~~~\mbox{as}~~~n\to\infty,~~ \|z_n-y_n\|\to 0, ~~~\mbox{as}~~~n\to\infty.
\end{align}
Next, we show that the set of all sequentially weak limit point of the sequence $\{x_n\}$ belongs to $EP(F,\mathcal{C}).$ Since $\{x_n\}$ is bounded we have that $\{x_n\}$ has at least one accumulation point, say $z\in\mathcal{H}.$ Assume that $\{x_{n_k}\}\subset\{x_n\}$ such that $x_{n_k}\rightharpoonup z, k\to\infty.$ Since $\|y_n-x_n\|\to 0,~n\to\infty,$ we have that $y_{n_k}\rightharpoonup z,~k\to\infty$ for some $\{y_{n_k}\}\subset \{y_n\}$.
		
\noindent Now,  from \cite[Lemma 3.2,  equation (14)]{thong} and \cite[Lemma 3.2, equation (6)]{thong}, we get 
\begin{align}\label{p11}
2F(y_n,z_n)\geq	\frac{2}{\lambda_n}\langle w_n-y_n,z_n-y_n\rangle -\frac{\mu}{\lambda_{n+1}}\left[\|w_n-y_n\|^2+\|z_n-y_n\|^2\right]
\end{align}
 and 
\begin{align}\label{p12}
\lambda_nF(y_n,y)\geq \lambda_n F(y_n, z_n)+\langle w_n-z_n,y-z_n\rangle,~~\forall~y\in\mathcal{C},
\end{align}
respectively. Combining \eqref{p11} and \eqref{p12}, we obtain
\begin{align}\label{p14}\nonumber
\lambda_{n_k}F(y_{n_k},y)&\geq \langle w_{n_k}-y_{n_k},z_{n_{k}}-y_{n_k}\rangle-\frac{\mu}{2}\frac{\lambda_{n_k}}{\lambda_{n_{k+1}}}\left[\|w_{n_k}-y_{n_k}\|^2+\|z_{n_{k}}-y_{n_k}\|^2\right]\\&+\langle w_{n_k}-z_{n_{k}},y-z_{n_{k}}\rangle,~\forall~y\in\mathcal{C}.
\end{align} Taking the limit in \eqref{p14} as $k\to \infty$ (taking note of \eqref{mnm3}-\eqref{af2}), we obtain
\begin{align*}
F(z,y)\geq 0,~\forall~y\in\mathcal{C}.
\end{align*}which implies that $z\in EP(F,\mathcal{C})$. Using this and Lemma \ref{lem3} in  Lemma \ref{lem4}, we have that $\{x_n\}$ converges weakly to an element in $EP(F,\mathcal{C}).$ This completes the proof.
\end{proof}

\hfill

\section{Numerical experiments}\label{Se6}
\noindent \noindent The focus of this section is to provide some computational experiments to demonstrate the effectiveness, accuracy and easy-to-implement nature of our proposed algorithms. We compare our proposed algorithm (Algorithm \ref{alg2}) with  Algorithm 1 in \cite{hur} and Algorithm 1 in \cite{vm}. Throughout this section, we shall name these algorithms RKSPW (Alg. 1) and VM (Alg. 1), respectively.

\hfill	
	
\begin{example}\label{EX1}
We consider the Nash-Cournot oligopolistic equilibrium model in \cite{FP} where the bifunction $F$ in $\mathbb{R}^N$ is of the form:
		\begin{align*}
			F(x,y)=\langle Px+Qy+q,y-x\rangle,
		\end{align*}where $q\in\mathbb{R}^N$ and $P,Q$ are two matrices of order $N$ such that $Q$ is symmetric positive semi-definite and $Q-P$ is symmetric negative semi-definite. The feasible set is defined as $
			\mathcal{C}=\left\{x\in\mathbb{R}^N:-5\leq x_i\leq 5\right\}$. The bifunction satisfies Assumption \ref{ass2} with $a_1=a_2=\frac{1}{2}\|P-Q\|$ (see \cite{qu}). The vectors $x_0, x_1, q$ are generated randomly and uniformly  in $[-N,N]$ and the two matrices $P,Q$ are generated randomly such that their properties are satisfied. 
\end{example}

\hfill

\begin{example}\label{EX3}
Let $\mathcal{H}=\left(\ell_2(\mathbb{R}),\|\cdot\|_2\right)$ be the linear spaces whose elements are all 2-summable sequences $\{x_i\}^{\infty}_{i=1}$ of scalars in $\mathbb{R},$ that is
$$\ell_2(\mathbb{R})=\mathcal{H}=\Biggl\{x=(x_1,x_2,\cdots,x_i,\cdots), x_i\in\mathbb{R}:\sum\limits_{i=1}^{\infty}|x_i|^2<\infty\Biggl\}$$  with inner product $\langle \cdot,\cdot\rangle:\ell_2\times\ell_2\to\mathbb{R}$ and norm $\|\cdot\|:\ell_2\to\mathbb{R}$ defined by $\langle x,y\rangle :=\sum\limits_{j=1}^{\infty}x_iy_i$ and  $\|x\|_2=\left(\sum\limits_{i=1}^{\infty}|x_i|^2\right)^{\frac{1}{2}}$, for $x=\{x_i\}^{\infty}_{i=1},~y=\{y_i\}^{\infty}_{i=1} \in \ell_2(\mathbb{R}).$  Let $\mathcal{C}=\Biggl\{x\in\mathcal{H}:\|x\|\leq 1\Biggl\}.$ Define the bifunction	$F:\mathcal{C}\times\mathcal{C}\to\mathbb{R}$ by $F(x,y)=(3-\|x\|)\langle x,y-x\rangle, \forall~x,y\in\mathcal{C}.$ It is easy to show that $F$ is a pseudomonotone bifunction which is not monotone and $F$ satisfies the Lipschitz-type condition with constants $a_1=a_2=\frac{5}{2}.$ Also, $F$ satisfies  Assumptions \ref{ass2} ((4)-(5)).  We consider the following cases for the numerical experiments of this  example

\noindent {\bf Case 1:}
Take  $x_1= \Biggl(\frac{5}{7},\frac{1}{7},\frac{1}{35},\cdots\Biggl)$ and $x_0=\Biggl(\frac{1}{2},\frac{1}{6},\frac{1}{18},\cdots\Biggl)$.

\noindent {\bf Case 2:}
Take  $x_1= \Biggl(\frac{1}{2},\frac{1}{6},\frac{1}{18},\cdots\Biggl)$ and $x_0=\Biggl(\frac{1}{3},\frac{1}{9},\frac{1}{27},\cdots\Biggl)$.
		
\noindent {\bf Case 3:}
Take  $x_1= \Biggl(\frac{1}{3},\frac{1}{9},\frac{1}{27},\cdots\Biggl)$ and $x_0=\Biggl(\frac{2}{5},\frac{1}{5},\frac{1}{10},\cdots\Biggl)$.
	\end{example}
	\noindent During the computation, we make use of the following:
\begin{itemize}
        \item Algorithm \ref{alg2}:  $\lambda_1 = 0.1, \mu=0.5, \epsilon=0.000001$, $\theta_n=\beta_n$ (when $\phi_n=\frac{n-0.5}{2n}$), where $\beta_n$ is as defined in \eqref{3.1}, $\theta_n=\frac{1-\epsilon}{3}$ (when $\phi_n=0.5)$ and $\theta_n=\sqrt{p_n^2+q_n}-p_n$ (when $\phi_n=\frac{n-0.1}{n}$), where $p_n$ and $q_n$ are as defined in \eqref{3.3} and \eqref{3.3b}, respectively.

\item RKSPW (Alg. 1) in \cite{hur}:  $\lambda_1 = 0.1, \delta=0.9, \sigma=0.9 \min\{1,0.5 a_1, 0.5a_2\}, \mu=0.9 \sigma$ and $\epsilon_n=\frac{100}{(n+1)^2}$.

\item VM (Alg. 1) in \cite{vm}: $\lambda=0.9 \min\{0.5 a_1, 0.5a_2\}, \theta=0.9$ and $\epsilon_n=\frac{100}{(n+1)^2}$.
\end{itemize}
\noindent  We then use the stopping criterion;  $\mbox{TOL}_n:=\|y_n-w_n\|<\varepsilon$ for all algorithms, where $\varepsilon$ is the predetermined error. 

\noindent All the computations are performed using Matlab 2016 (b) which is running on a  personal computer with an Intel(R) Core(TM) i5-2600 CPU at 2.30GHz and 8.00 Gb-RAM.\\

\noindent In the tables below, \quotes{Iter} means the number of iterations.  Also, in the tables and figures, Alg. 3.3, RKSPW (Alg. 1), and VM (Alg. 1) represent Algorithm \ref{alg2}, Algorithm 1 in \cite{hur}  and Algorithm 1 in \cite{vm}, respectively.

	\begin{center}
{\bf Table 1. Numerical results for Example \ref{EX1} with $\epsilon=10^{-5}$.}
		\adjustbox{width=15cm}{ 
			
\begin{tabular}{c c c c c c c c c c c c c c c c c c}
		\hline\\
		& \multicolumn{2}{c}{N=20} & \multicolumn{2}{c}{N=50} & \multicolumn{2}{c}{N=100} \\
		\cline{2-3}\cline{4-5}\cline{6-7}\cline{8-9}\cline{10-11}\cline{12-13}\\
	\hline\\
	Algorithms	&  CPU Time & Iter. & CPU Time & Iter. & CPU Time& Iter. \\
	\hline\\
	\noindent	Alg. \ref{alg2} $\big(\phi_n=\frac{n-0.5}{2n}\big)$&  5.2040 & 152 & 9.7123 & 233 & 10.3026 & 264\\ [0.5ex]
		\hline \\
		  \noindent Alg. \ref{alg2} $\big(\phi_n=0.5\big)$  &  5.1138 & 122 & 7.5978 & 176 & 7.9771 & 186 \\ [0.5ex]
		 \hline\\
Alg. \ref{alg2} $\big(\phi_n=\frac{n-0.1}{n}\big)$&  3.9750 & 82 & 5.6901 & 119& 6.4106 & 135\\ [0.5ex]
\hline \\
 RKSPW (Alg. 1) in \cite{hur} &  17.1610& 361 & 15.7162& 339 & 13.9429 & 304\\ [0.5ex]
\hline \\
 VM (Alg. 1) in \cite{vm} &11.2082 &231 & 14.7417 & 305 & 13.5661 & 283\\ [0.5ex]
		 \hline \\
	\end{tabular}}
	\label{T1}
	\end{center}

\begin{figure}
	\begin{center}
		\includegraphics[width=7cm]{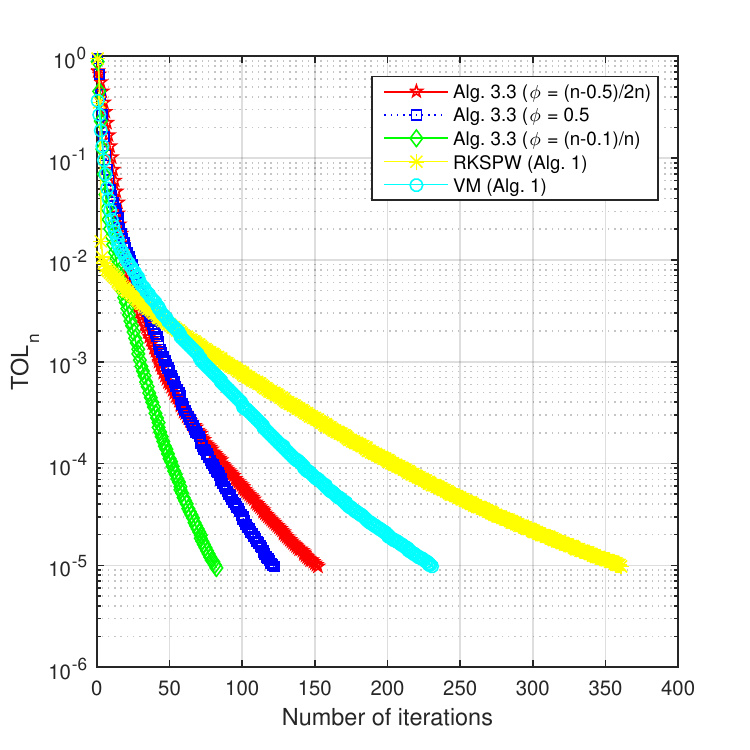}%
		\includegraphics[width=7cm]{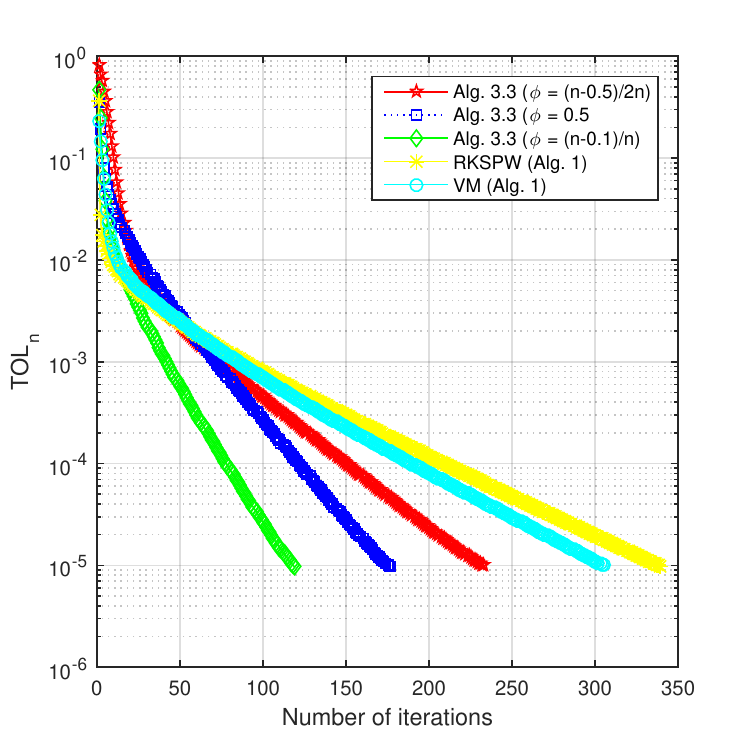}\\
		\includegraphics[width=7cm]{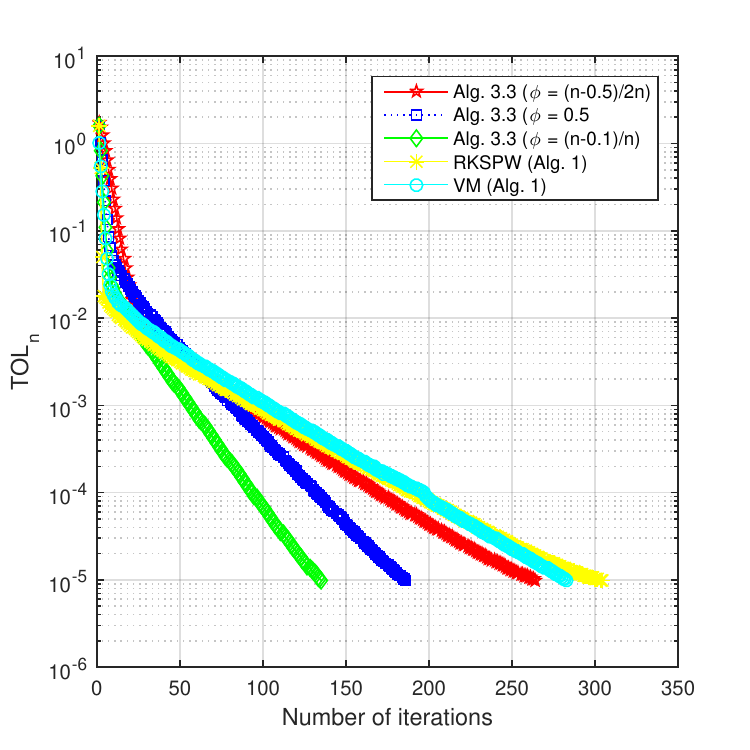}%
	\end{center}
	\caption{The behavior of $\mbox{TOL}_n$ with $\epsilon=10^{-5}$:
		Top Left: {\bf $N=20$}; Top Right: {\bf $N=50$}; Bottom: {\bf $N=100$}.} 
\end{figure}

\begin{center}
	{\bf Table 2. Numerical results for Example \ref{EX3} $\epsilon=10^{-5}$.}
\adjustbox{width=15cm}{ 

\begin{tabular}{c c c c c c c c c c c c c c c c c c}
		\hline\\
& \multicolumn{2}{c}{Case 1} & \multicolumn{2}{c}{Case 2} & \multicolumn{2}{c}{Case 3} \\
\cline{2-3}\cline{4-5}\cline{6-7}\cline{8-9}\cline{10-11}\cline{12-13}\\
\hline\\
Algorithms&  CPU Time & Iter. & CPU Time & Iter. & CPU Time& Iter. \\
\hline\\
\noindent	Alg. \ref{alg2} $\big(\phi_n=\frac{n-0.5}{2n}\big)$& 0.0157 & 99 & 0.0114 & 106 & 0.0107 & 102\\ [0.5ex]
\hline \\
\noindent Alg. \ref{alg2} $\big(\phi_n=0.5\big)$  &  0.0090 & 63 & 0.0085 & 68 & 0.0062 & 66 \\ [0.5ex]
\hline\\
Alg. \ref{alg2} $\big(\phi_n=\frac{n-0.1}{n}\big)$&  0.0087 & 47 & 0.0087 & 51& 0.0083 & 49\\ [0.5ex]
\hline \\
RKSPW (Alg. 1 in \cite{hur}) &  0.1254& 110 & 0.1123& 110 & 0.1382 & 110\\ [0.5ex]
\hline \\
VM (Alg. 1 in \cite{vm}) &0.1391 &140& 0.1169 & 162 & 0.1471 & 150\\ [0.5ex]
\hline \\
\end{tabular}}
\label{T2}
\end{center}

\begin{figure}
\begin{center}
\includegraphics[width=7cm]{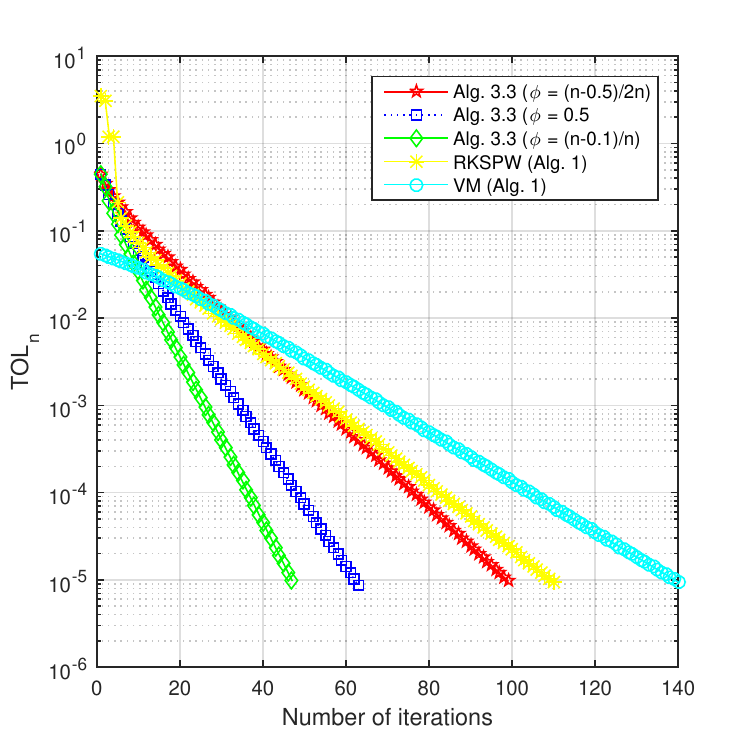}%
\includegraphics[width=7cm]{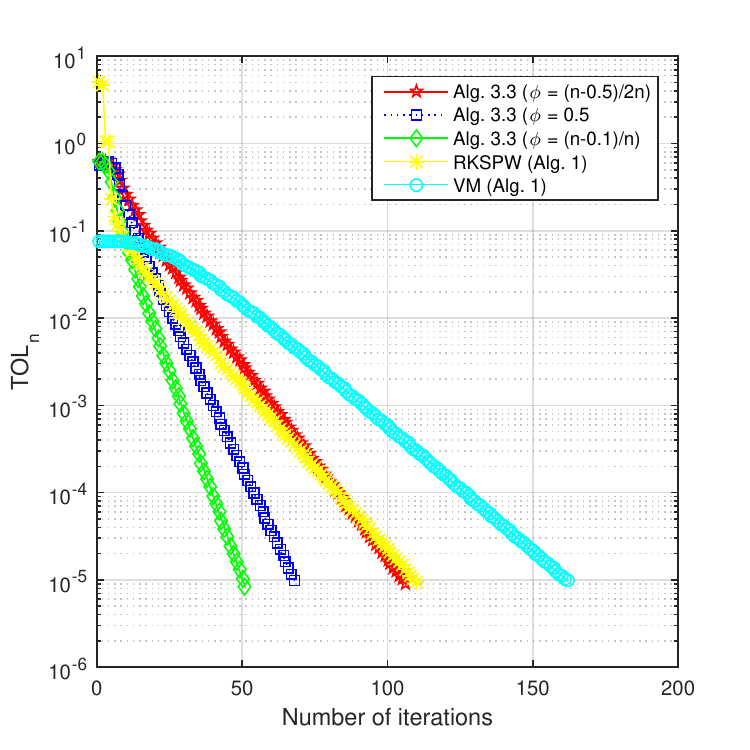}\\
\includegraphics[width=7cm]{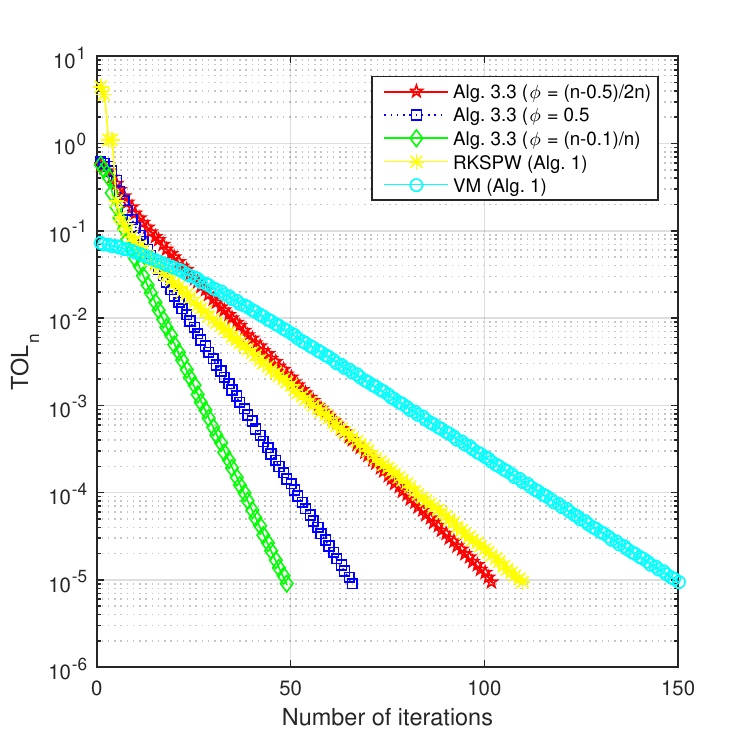}%
\end{center}
\caption{The behavior of $\mbox{TOL}_n$ with $\epsilon=10^{-5}$:
Top Left: {\bf Case 1}; Top Right: {\bf Case 2}; Bottom: {\bf Case 3}.} 
\end{figure}

	\section{Conclusion}\label{Se7}
\noindent We have considered in this paper, a new  inertial condition for the  subgradient extragradient method with self-adaptive step size for solving pseudomonotone equilibrium problem in a real Hilbert space.  It was proved that the sequence of iterates generated by our proposed method converges weakly  to a solution of the equilibrium problem under improved conditions on the inertial factor than many existing conditions in the literature.  Numerical results are given to support our analysis. 

	\section{acknowledgment}
	\noindent The  authors acknowledges with thanks the  School of Mathematics at University of the Witwatersrand for making their facilities available for the research.
	\hfill
	
	\noindent{\bf Funding}
	
	\noindent The second author is supported by the postdoctoral research grant from University of the Witwatersrand, South Africa.

	\noindent {\bf {Availability of data and material}}\\
	Not applicable.\\
	
	\noindent {\bf {Competing interests}}\\
	The authors declare that they have no competing interests.\\
	
	\noindent {\bf Authors' contributions}
	
	\noindent All authors worked equally on the results and approved the final manuscript.

\end{document}